\documentclass[12pt]{article}
\usepackage{graphicx}
\usepackage[dvips]{epsfig}
\usepackage{subfigure}
\usepackage[a4paper]{geometry}
\usepackage{tikz-cd}
\usepackage{amsmath,amssymb,amsthm,amsfonts,hyperref,amscd}
\usepackage{times}
\usepackage{setspace}
\usepackage{verbatim}
\usepackage{cancel}
\usepackage{mathtools}
\usepackage[toc,page]{appendix}
\usepackage{multirow}

\newtheorem{thm}{Theorem}[section]
\newtheorem{defn}[thm]{Definition}
\newtheorem{lem}[thm]{Lemma}

\newtheorem{cor}[thm]{Corollary}
\newtheorem{rmk}[thm]{Remark}

\theoremstyle{plain}
\theoremstyle{definition}
\counterwithin*{equation}{section}

\newcommand{\F}{\mathcal{F}}

\newcommand{\nD}{\nabla}

\newcommand{\KN}{\mathbin{\bigcirc\mspace{-15mu}\wedge\mspace{3mu}}}

\newcommand{\addresses}{\bigskip\footnotesize

J. Moon, \textsc{Department of Mathematics, Chung-Ang University, 84 HeukSeok-ro DongJak-gu, Seoul 06974, Republic of Korea.} \par\nopagebreak
\textit{E-mail address:}\texttt{dsfish999@cau.ac.kr} }

\title{On a compact Einstein-type manifold with Riemannian foliations}
\author{Jungwoo Moon}
\date{}

\begin{document}

\maketitle

\begin{abstract}
We investigate a compact Einstein-type manifold whose potential vector field generates a Riemannian foliation. In particular, we prove necessary conditions for such a manifold to be taut and to have a splitting property. Additionally, some properties of taut Riemannian foliations on a compact almost Ricci solitons and a compact Einstein manifolds are provided.
\end{abstract}

\section{Introduction} An Einstein manifold is a Riemannian manifold $M$ whose Ricci curvature is a constant multiple of its metric $g.$ Due to its role in differential geometry, the study of Einstein manifolds has been extensively considered in Riemannian geometry. To generalize the notion of Einstein manifolds, Catino et al. \cite{CMMR} introduced the Catino-Mastrolia-Monticelli-Rigoli-Einstein type manifold(hereafter, \textit{Einstein-type manifold} in short) defined by the following equation on $M$.
\begin{equation}
    \alpha Ric+\dfrac{\beta}{2}L_Xg+\gamma X\otimes X=\lambda \text{ } g,
\end{equation} for a potential vector field $X$, constants $\alpha,\beta,\gamma$ and a smooth function $\lambda$ on $M$. Here, $g$ denotes the Riemannian metric of $M$, $Ric$ the Ricci curvature of $(M,g)$, $L_Xg$ the Lie derivative of $g$ with respect to $X,$ and $\otimes$ the tensor product of tensor fields on $M$. When $X=\nD f$ for some smooth function $f,$ we say a Einstein-type manifold is \textit{gradient}.

In particular, if $\alpha,\beta\neq 0,$ $\gamma=0$, then $(M,g)$ is called an \textit{almost Ricci soliton}. If additionally, $\lambda$ is constant, then $(M,g)$ reduces to a \textit{Ricci soliton}. On the other hand, if $\alpha=\gamma=0$ and $\lambda$ equals the scalar curvature $S$ of $M,$ then $(M,g)$ is called a \textit{Yamabe soliton.}

Now we recall several properties for gradient Einstein-type manifolds. Fernandez-Lopez and Garcia-Rio proved a rigidity theorem on complete gradient Ricci soliton with vanishing Cotton tensor $C$ \cite{FG}. Motivated by their work, Catino, Mastrolia and Monticelli established a rigidity result for complete gradient Ricci soliton with vanishing complete divergence $\mathrm{div}^3C$ of $C$ \cite{CMM}. Subsequently, Catino and Mastrolia have shown that $\mathrm{div}^2C(X)=0$ is equivalent to $\mathrm{div}^3C=0$ on a compact Ricci soliton \cite{CM}. Co and Hwang further proved that an analogous statement holds for a gradient almost Ricci soliton \cite{CH}. 

More generally, Catino et al. have proven that a complete, noncompact, non-conformally Einstein, gradient Einstein-type manifold of dimension $\geq 3$ with vanishing Cotton tensor and a proper potential function $f$ is locally a warped product with a $1$-codimensional Einstein fiber over each regular level set of $f$ (cf. Theorem 2.2 for the details) \cite{CMMR}. In particular, it should be remarked that if every level set of $f$ is regular, then the $1$-codimensional Einstein fiber could also be regarded as the transverse space of the foliation induced by $\nD f$ whose normal bundle is involutive. That is, the Lie bracket is a closed operation on the normal bundle of $\nD f$ (cf. Remark 2.3).

Although examples of almost Ricci solitons whose potential vector field $X$ is neither non-zero nor gradient are known (cf.\cite{BD, Lau}), few properties on a general Einstein-type manifold have been established. Hence, the purpose of this paper is to investigate properties of compact Einstein-type manifolds that are analogous to known results for gradient cases. 

To study a Einstein-type manifold $(M,g)$ with dimension $\geq 3$, we assume that the potential vector field $X$ defines a Riemannian flow (i.e. $1$-dimensional Riemannian foliation) $\mathcal{F}_X$ and denote the resulting manifold by $(M,\mathcal{F}_X,g)$. That is, $X$ is nonvanishing and satisfies $L_Xg_Q=0,$ where $g_Q$ is the Riemannian metric restricted to the normal bundle of $X$. For simplicity, we further assume throughout this paper that $M$ is compact and the mean curvature $\kappa$ of $(M,\mathcal{F}_X,g)$ is basic, i.e. $L_X\kappa=0$ (cf. Lemma 2.5). Since $\F_X$ is $1$-dimensional, $\F_X$ may be viewed as an analog of the foliation generated by the potential function of a gradient Einstein-type manifold.

Under the above setting, we obtain the following property on Einstein-type manifold with the foliation $\F_X$ (cf. Theorem 4.1).

\begin{thm}
    Let $(M,\mathcal{F}_X,g)$ be a compact Einstein-type manifold of dimension $\geq 3$ satisfying $\beta\neq 0$. If the mean curvature $\kappa$ of $\mathcal{F}_X$ is basic, then $\F_X$ is taut. i.e. $\F_X$ is minimal for some Riemannian metric $g$.
\end{thm}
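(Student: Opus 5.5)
The approach is to use the Álvarez López characterization of tautness. For a Riemannian flow on a compact manifold with basic mean curvature $\kappa$, the form $\kappa$ is closed, and $\F_X$ is taut if and only if the basic class $[\kappa]\in H^1_B(\F_X)$ vanishes. Concretely, if $\kappa=dh$ for a basic function $h$, then rescaling the metric along the leaves by $e^{2h}$ (keeping $g_Q$ fixed) produces a bundle-like metric for which the leaves are minimal. So the goal is to show that $\kappa$ is basic-exact, and more precisely that $\kappa$ is essentially $d\log|X|$ up to a term that we will show integrates to zero against the harmonic part.

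\textbf{Step 1 (mixed component of the equation).} Write $X=|X|T$ with $T$ a unit field, and fix the convention that $\kappa^\sharp=-\pi(\nabla_TT)$, where $\pi$ is the normal projection. For $Y$ normal to $X$, the condition $L_Xg_Q=0$ together with $[X,X]=0$ gives
\begin{equation*}
(L_Xg)(X,Y)=g(\nabla_XX,Y)+\tfrac12 Y(|X|^2)=-|X|^2\kappa(Y)+\tfrac12 Y(|X|^2).
\end{equation*}
We then evaluate the Einstein-type equation on the pair $(X,Y)$. Both the $\gamma$-term and the $\lambda g$ term vanish because $g(X,Y)=0$. This leaves
\begin{equation*}
\alpha\,Ric(X,Y)=\tfrac{\beta}{2}|X|^2\bigl(\kappa-d\log|X|\bigr)(Y).
\end{equation*}
Since $\beta\neq0$, this expresses $\kappa-d\log|X|$ on the normal bundle as $\tfrac{2\alpha}{\beta}|X|^{-1}Ric(T,\cdot)$.

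\textbf{Step 2 (the case $\alpha=0$).} When $\alpha=0$, we obtain $\kappa=d\log|X|$ on $Q$. Evaluating the equation on $(X,X)$, and using that $\kappa$ is basic, should show that $X|X|=0$. Then $\log|X|$ is a basic function and $\kappa$ is basic-exact, which gives tautness by the rescaling above.

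\textbf{Step 3 (the case $\alpha\neq0$).} We take the basic Hodge decomposition $\kappa=\kappa_h+dh$, with $\kappa_h$ basic harmonic; this is available because $\kappa$ is basic and closed. It suffices to show $\kappa_h=0$. For this we need the standard formula for the mixed Ricci curvature of a Riemannian flow in terms of basic data. Writing $\chi$ for the characteristic form, one has $d\chi=-\kappa\wedge\chi+\varphi_0$, and $Ric(T,Y)$ is a combination of $(\delta_B\varphi_0)(Y)$ and $\varphi_0(\kappa^\sharp,Y)$. We substitute this into Step 1, take the inner product with $\kappa_h$, and integrate over $M$. The term $\langle d(\log|X|-h),\kappa_h\rangle$ integrates to zero because $\delta_B\kappa_h=0$. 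The term $\langle\delta_B\varphi_0,\kappa_h\rangle$ integrates to $\langle\varphi_0,d\kappa_h\rangle=0$. The weight $|X|^{\pm1}$ must be handled here, either by first showing that $|X|$ is basic (from the $(X,X)$ and $(X,Y)$ components, as in Step 2) or by working with the measure $|X|^{-1}dv_g$. What remains should be $\int|\kappa_h|^2$, which forces $\kappa_h=0$.

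\textbf{Main obstacle.} The hard step is Step 3. Controlling the mixed Ricci term $Ric(T,\cdot)$ and the $|X|$-weights precisely enough that every cross term integrates to zero is delicate, and the quadratic term $\varphi_0(\kappa^\sharp,\cdot)$ in particular may need an extra argument. The first thing to try is to exploit that $\kappa_h$ is parallel-like in the basic sense, or to use the Bochner identity for basic harmonic $1$-forms to absorb that term.
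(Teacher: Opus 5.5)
\textbf{There is a genuine gap in your Step 3, the main case $\alpha\neq0$, and it is more than bookkeeping.} The mixed component only expresses $\kappa-d\log|X|$ on $Q$ through $Ric(T,\cdot)$, and $Ric(T,\cdot)$ is an independent quantity. Pairing with $\kappa_h$ leaves you with $\int_M|\kappa_h|^2=\tfrac{2\alpha}{\beta}\int_M|X|^{-1}Ric(T,\kappa_h^\sharp)$, and nothing in your plan makes the right-hand side vanish.

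\begin{itemize}
\item Even granting that $|X|$ is basic, the $\delta_B\varphi_0$ part becomes, up to constants, $\int_M\langle\varphi_0,d(|X|^{-1})\wedge\kappa_h\rangle$. This has no reason to vanish.
\item The $\varphi_0(\kappa^\sharp,\cdot)$ part has no sign.
\item A Bochner argument will not rescue this, since basic harmonic $1$-forms are not parallel without curvature hypotheses.
\end{itemize}

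The ingredient you never use is the contracted second Bianchi identity $\mathrm{div}\,Ric=\tfrac12 dS$. The paper takes the divergence of the whole equation, evaluates it on $X$ and integrates. Because $S=S^Q-|A|^2+2\,\mathrm{div}_B\kappa$ and $\lambda$ are basic, this gives (3.33). Combined with (3.29), which comes from differentiating the $(U,U)$ component along $X$, it yields $\tfrac12\int_M|L_Xg|^2=-\int_M(\mathrm{div}X)^2$. So $X$ is Killing, and Carri\`ere's criterion (Lemma 2.8) gives tautness. The paper needs no case split on $\alpha$ and no basic Hodge theory.

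\textbf{How to repair your set-up with this ingredient.} Write $f=|X|$ and $\chi=T^\flat$.

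First, show $\mathrm{div}X=T(f)=0$. The $(X,X)$ component reads $\beta T(f)+\gamma f^2=\Phi$, where $\Phi=\lambda-\alpha\,Ric(T,T)$. This $\Phi$ is basic by (3.8) and Lemma 3.2, with $|A|^2$ basic because $A$ is encoded by the basic form $\varphi_0$. Multiply by $T(f)$ and integrate using $\mathrm{div}\,T=0$. Then $\int_M\Phi\,T(f)=-\int_M f\,T(\Phi)=0$ and $\int_M f^2T(f)=\tfrac13\int_M T(f^3)=0$, leaving $\beta\int_M T(f)^2=0$.

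Second, pair the mixed component not with $\kappa_h$ but with $f\omega$, where $\omega=\kappa-d\log f$. Since now $L_Xg=-f(\chi\otimes\omega+\omega\otimes\chi)$, you get
\[
\tfrac{\beta}{2}\int_M f^2|\omega|^2=\alpha\int_M f\,Ric(T,\omega^\sharp)=-\tfrac{\alpha}{2}\int_M\langle Ric,L_Xg\rangle=-\tfrac{\alpha}{2}\int_M S\,\mathrm{div}X=0 .
\]
Hence $\kappa=d\log|X|$ with $|X|$ basic, which is exactly the basic exactness you were after. The same argument covers $\alpha=0$.

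A smaller point on your Step 2: $X|X|=0$ does not follow from $\kappa$ being basic. It follows from $\lambda=0$ (forced by the normal components), $\mathrm{div}\,T=0$, and the divergence theorem applied to $\beta T(f)+\gamma f^2=0$.
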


In particular, we additionally assume that the normal bundle of $\F_X$ is involutive to study the analogous converse of the result of Catino et al. \cite{CMMR} on compact Einstein-type manifolds (cf. Remark 2.12). Then we have the following corollary of Theorem 1.1, which explains the splitting behavior of the total space $M$ (cf. Corollary 4.2).

\begin{cor}
     Let $(M,\mathcal{F}_X,g)$ be a compact, $n(\geq3)$-dimensional Einstein-type manifold with $\alpha\beta\neq0$. If the normal bundle of the Riemannian flow $\mathcal{F}_X$ is involutive and $\kappa$ of $\F_X$ is basic, then the Riemannian universal covering $\tilde{M}$ of $M$ splits as the Riemannian product $\mathbb{R}\times N$, where $N$ is a complete Einstein manifold.  
\end{cor}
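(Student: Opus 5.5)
Here $\F_X$ is the $1$-dimensional foliation by integral curves of $X$ with normal bundle $Q=X^\perp$. The plan is to combine Theorem 1.1 with the involutivity of $Q$ and show that $X$ becomes, after rescaling, a parallel vector field. A parallel field on a complete manifold gives the de Rham splitting of $\tilde M$. The Einstein property of the factor $N$ then comes from the Einstein-type equation with $\alpha\neq0$.

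\emph{Step 1: reduce to a geodesible, bundle-like, normal-involutive flow.} By Theorem 1.1, $\F_X$ is taut. Since $\kappa$ is basic, a theorem of Álvarez López / Domínguez (or the arguments of Section 2) lets us conformally modify the metric along the leaves only, so that the flow becomes minimal, i.e. geodesible. I must keep track of which metric the final statement concerns. I would aim to show that tautness combined with the equation forces $\kappa=0$ already for $g$. To do this, I would integrate the equation against $X\otimes X$ and $g$ over compact $M$, using Lemma 2.5 and the basic-ness of $\kappa$ to make boundary terms vanish. Then let $\xi=X/|X|$ be the unit field. Minimality gives $\nabla_\xi\xi=0$. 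The Riemannian (bundle-like) condition gives that $Y\mapsto \nabla_Y\xi$ restricted to $Q$ is skew-symmetric. Involutivity of $Q$ says exactly that the O'Neill $A$-tensor (the integrability obstruction of $Q$) vanishes, i.e. this skew part is zero. Hence $\nabla_Y\xi=0$ for all $Y\in Q$, and together with $\nabla_\xi\xi=0$, $\xi$ is parallel.

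\emph{Step 2: splitting.} Since $\xi$ is a parallel unit field on a compact (hence complete) manifold, lift it to $\tilde M$. The de Rham decomposition theorem gives $\tilde M=\mathbb R\times N$ isometrically, with $N$ an integral leaf of $\tilde Q$. $N$ is complete as a closed totally geodesic factor of a complete simply connected manifold.

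\emph{Step 3: $N$ is Einstein.} Write $X=\varphi\xi$. Then $L_Xg=d\varphi\otimes\xi^\flat+\xi^\flat\otimes d\varphi$ because $\xi$ is parallel. Also $Ric(\xi,\cdot)=0$, and $Ric$ restricted to $TN$ equals $Ric_N$. Restricting the equation to $Q\times Q$, the $L_Xg$ and $X\otimes X$ terms vanish there, giving $\alpha Ric_N=\lambda g_N$. Since $\alpha\neq0$, $Ric_N=(\lambda/\alpha)g_N$. It remains to see that $\lambda$ is constant on $N$. If $n-1\ge3$, Schur's lemma applied on $N$ gives this. If $n=3$, I would instead use the mixed components: the $(\xi,Q)$ part gives $\tfrac\beta2 d\varphi|_Q=0$, so $\varphi$ is constant on $N$. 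The $(\xi,\xi)$ part gives $\beta\,\xi(\varphi)+\gamma\varphi^2=\lambda$. Since $\varphi$ and $\lambda$ then depend only on the $\mathbb R$-coordinate, $\lambda$ is constant along $N$.

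\emph{Main obstacle.} The delicate point is Step 1. Theorem 1.1 only yields minimality for \emph{some} metric, while the splitting must hold for the given $g$. I will first try to prove directly that $\kappa=0$. The approach is to take the divergence of the equation, paired with $\xi$ and integrated over $M$, using $\beta\neq0$ and the basic-ness of $\kappa$ (so that $\kappa$ is closed and $d\kappa$ terms drop). This should give $\int|\kappa|^2=0$.
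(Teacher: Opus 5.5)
Your Step 1 is where the argument breaks: you never prove that $\kappa=0$ for the given metric $g$. Integrating the equation against $X\otimes X$ and $g$, or pairing its divergence with $\xi$, in the hope of obtaining $\int_M|\kappa|^2=0$, is a plan rather than an argument. Steps 2 and 3 (parallel $\xi$, de Rham splitting, Einstein factor) rest entirely on it.

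Tautness from Theorem 1.1 cannot supply it, because tautness is a property of the foliation, not of $g$. Take $M=N\times S^1$ with $g=g_N+f^2dt^2$ and $f>0$ nonconstant on a compact $N$. The flow of $X=\partial_t$ is Riemannian, $Q=TN$ is involutive, and $X$ is even Killing, so the flow is taut. Yet $\kappa=-d\log f$ is basic and nonzero. So the Einstein-type equation itself must force $\kappa=0$, and your proposal does not say how.

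Granting $\kappa=0$, your Steps 2 and 3 are fine. Your use of the $(\xi,Q)$ and $(\xi,\xi)$ components to get $\lambda$ constant on $N$ when $n=3$ is more careful than the paper, which just reads off $\lambda g_Q=\alpha Ric^Q$ from (3.6).

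The paper handles this step with the stronger output of the proof of Theorem 4.1 rather than with tautness. That output is $\frac12\int_M|L_Xg|^2=-\int_M|\mathrm{div}X|^2$, i.e. $L_Xg=0$ for the given $g$. The paper then evaluates the mixed component $L_Xg(U,Y)$, $Y\perp X$, using $A=0$. That is the ingredient missing from your plan.

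Be aware, though, that this component, computed correctly, is $L_Xg(U,Y)=|X|\,\kappa(Y)+Y(|X|)$. This is because $g(D_YX,U)=Y(|X|)+|X|\,g(D_YU,U)=Y(|X|)$. The paper's computation just before Corollary 4.2 instead replaces $g(D_YX,U)$ by $g(A_YX,U)=0$. Hence $L_Xg=0$ only yields $\kappa=-d\log|X|$, which is exactly the situation of the example above.

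To complete Step 1 by either route you must also prove that $|X|$ is constant. Write $g$ locally as $g_N+\varphi^2dt^2$ with $\varphi=|X|$ and $X=\partial_t$. The equation then reduces to $\alpha(Ric_N-\varphi^{-1}\mathrm{Hess}_N\varphi)=\lambda g_N$ and $\lambda=-\alpha\varphi^{-1}\mathrm{tr}\,\mathrm{Hess}_N\varphi+\gamma\varphi^2$. One has to show these force $\varphi$ to be constant on compact $M$. Neither your proposal nor the paper's displayed computation does this.
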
 

Note that the splitting property of Corollary 1.2 holds if $\alpha=0$ and $\beta \neq 0,$ but the manifold $N$ of the transverse part of $M$ may not be Einstein (cf. Remark 4.3).

Another application of Theorem 1.1 provides a taut Riemannian flow on an almost Ricci soliton. i.e. a Einstein-type manifold satisfying $\alpha=\beta=1$ and $\gamma=0$. Precisely, we have the following corollary (cf. Remark 5.1).

\begin{cor}
Let $(M,\F_X,g)$ be a compact almost Ricci soliton of dimension $\geq 3$. If $\kappa$ of $\F_X$ is basic, then $M$ is Einstein and $\F_X$ is a taut Riemannian flow.
\end{cor}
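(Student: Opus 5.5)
The plan is to specialize Theorem 1.1 and then show separately that the soliton is Einstein. An almost Ricci soliton is the case $\alpha=\beta=1$, $\gamma=0$, so $\beta\neq0$. Under the standing hypotheses that $M$ is compact of dimension $\geq3$, that $X$ generates the Riemannian flow $\F_X$, and that $\kappa$ is basic, Theorem 1.1 applies directly and gives that $\F_X$ is taut. Tautness therefore comes for free. The real content is proving that $M$ is Einstein, i.e.\ that $L_Xg$ is a multiple of $g$ and that the resulting function $\lambda$ behaves well enough.

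For the Einstein part I would use the structure of $L_Xg$ for a Riemannian flow. Choose a local orthonormal frame $\{\xi=X/|X|, E_1,\dots,E_{n-1}\}$ adapted to $\F_X$. The condition $L_Xg_Q=0$ gives $(L_Xg)(E_i,E_j)=0$. The soliton equation then reads $Ric(E_i,E_j)=\lambda\,\delta_{ij}$ on the normal bundle, since $\gamma=0$.

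The mixed and tangential components are
\[
(L_Xg)(\xi,E_i)=g(\nD_{\xi}X,E_i)+g(\nD_{E_i}X,\xi), \qquad (L_Xg)(\xi,\xi)=2\,\xi(|X|)\,|X|^{-1}\cdot|X|... ,
\]
and both can be expressed through $|X|$, the mean curvature $\kappa$ (the normal part of $\nD_\xi\xi$), and the transverse gradient of $|X|$. The mixed term reduces to $d|X|(E_i)-|X|\kappa(E_i)$. Here I would invoke the tautness from Theorem 1.1, together with the basic-$\kappa$ assumption (Lemma 2.5), to change to a metric or gauge in which $\kappa$ vanishes, or is exact as $\kappa=d\log|X|$ transversally. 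The goal of this step is to show that the mixed components vanish and that $\xi(|X|)$ is constant along leaves.

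Next I would integrate. Taking the trace gives $S+\mathrm{div}X=n\lambda$. Taking the divergence of the soliton equation and using the contracted Bianchi identity gives an identity relating $\nD\lambda$, $\nD S$ and $\Delta X$. On compact $M$, integrating $\mathrm{div}X=\xi(|X|)+\dots$ should force $\int \xi(|X|)=0$; since this function is basic, I would conclude $\xi(|X|)=0$. That makes $X$ a Killing field, so $L_Xg=0$ and $Ric=\lambda g$. With $n\geq3$, Schur's lemma then forces $\lambda$ to be constant, so $M$ is Einstein.

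The main obstacle is proving that the component $(L_Xg)(\xi,\xi)$ and the mixed components vanish globally, i.e.\ that $X$ is actually Killing and not merely transversally Killing. I would first try the integral formula $\int_M \mathrm{div}X=0$ combined with the Bochner-type identity $\mathrm{div}(\nD_XX)$. If that is not enough, I would fall back on the computation already carried out in the proof of Theorem 1.1 (Section 4) relating $\kappa$ to $d\log|X|$.
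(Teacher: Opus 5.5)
Your use of Theorem 1.1 for tautness is fine, and your final Schur step matches Remark 5.1. But the core of the Einstein part, namely that $X$ is Killing, is never established, and the steps you sketch for it do not work.

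You cannot pass to a metric or gauge in which $\kappa=0$. The soliton equation is an identity for the fixed metric $g$, and tautness only produces some other bundle-like metric. The alternative ``$\kappa=\pm d\log|X|$ transversally'' is circular. With $U=X/|X|$ (your $\xi$) and $\kappa^\sharp=\pi(D_UU)$, one has $(L_Xg)(U,Y)=Y(|X|)+|X|\kappa(Y)$ for $Y\perp X$. So that relation \emph{is} the vanishing of the mixed components you are trying to prove. Your fallback is circular for the same reason, since the computation (4.2)--(4.3) after Theorem 4.1 starts from $L_Xg=0$.

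Finally, for a Riemannian flow $\mathrm{div}\,U=0$, so $\mathrm{div}X=U(|X|)$ exactly, and $\int_M U(|X|)=0$ is just the divergence theorem. Nothing makes $U(|X|)$ basic, and in any case a basic function with zero integral need not vanish.

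The missing idea is a global integral identity, and it is exactly what the proof of Theorem 1.1 provides. That proof yields $L_Xg=0$ itself; tautness is then read off from Lemma 2.8. You listed the right tool, the divergence of the soliton equation $\tfrac12 dS+\tfrac12\mathrm{div}(L_Xg)=d\lambda$, but never paired it with $X$. Pairing with $X$ and integrating by parts, using $\int_M g(\mathrm{div}(L_Xg),X^\flat)=-\tfrac12\int_M|L_Xg|^2$, gives
$\tfrac12\int_M X(S)-\tfrac14\int_M|L_Xg|^2=\int_M X(\lambda)$.
The paper then disposes of $\int_M X(S)$ and $\int_M X(\lambda)$ via (3.25), (3.17), (3.18) and the basicness of $S^Q$ and $\mathrm{div}_B\kappa$. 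This gives (3.33), which for $\gamma=0$ reads $\int_M|L_Xg|^2=0$. Hence $Ric=\lambda g$, and your Schur step finishes.

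If you prefer not to rely on (3.17)--(3.18), use three pointwise identities:
$(n-1)X(\lambda)=-2X(|A|^2)$, $X(S)=-X(|A|^2)$, and $X(\lambda)=X(|A|^2)+X(\mathrm{div}X)$.
These follow from (3.8), (3.25), (3.27) and the basicness of $S^Q$ and $\mathrm{div}_B\kappa$. They turn the integrated identity into
\[
\int_M|L_Xg|^2=\frac{2(5-n)}{n+1}\int_M(\mathrm{div}X)^2 .
\]
Since $|L_Xg|^2\ge (L_Xg(U,U))^2=4(\mathrm{div}X)^2$, this forces $\mathrm{div}X=0$ and $L_Xg=0$ for every $n\ge 3$.
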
 

Note that the above results are based on $\beta\neq 0$ case. In particular, no aforementioned results are justified on a compact Einstein manifold, which is also an Einstein-type manifold with $\beta=0$ and $\gamma=0$. Hence, it is natural to ask when a Riemannian flow on a compact Einstein manifold is taut since defining a potential vector field is redundant on an Einstein manifold. For the aforementioned reason, let us consider a compact Einstein manifold with a transverse Einstein Riemannian flow. Here, a \textit{transverse Einstein foliation} is a Riemannian foliation whose transverse metric is Einstein, i.e. $Ric^Q=\lambda^Q g_Q$ for some constant $\lambda^Q$. 

Previously, A. Ranjan \cite{Ran} has proven that a compact Einstein manifold with negative scalar curvature cannot attain a Riemannian flow (cf. Lemma 5.2). Also, applying the author's result \cite{Moo}, one could obtain that a $1$-dimensional transverse Einstein foliation with nonnegative transverse scalar curvature on a compact manifold must be taut (cf. Lemma 5.3). Unifying the result of the author and Ranjan, we conclude that such Riemannian flow on a compact Einstein manifold must be taut (cf. Theorem 5.4). Furthermore, the above unified result of the author and Ranjan is generalized as follows (cf. Theorem 5.5).
   
\begin{thm} A transverse Einstein foliation with zero scalar curvature leaves on a compact Einstein manifold must be taut. \end{thm}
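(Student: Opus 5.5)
The plan is to run the argument of Theorem 5.4 (Ranjan's Lemma 5.2 together with Lemma 5.3) for leaves of arbitrary dimension, with the hypothesis $S_L=0$ on the leaves replacing the fact that one-dimensional leaves are flat. Throughout I will use a local adapted orthonormal frame $\{e_1,\dots,e_p\}$ tangent to the leaves and $\{e_{p+1},\dots,e_n\}$ normal to them. I write $T$ for the second fundamental form of the leaves, $A$ for the integrability tensor of the normal bundle, and $\kappa$ for the mean curvature form. Since $\mathcal F$ is Riemannian, the symmetric part of the second fundamental form of the normal distribution vanishes. Hence the mean curvature vector of the normal bundle is zero, and only $A$ measures its failure to be totally geodesic.

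First I take two traces of the Einstein condition $Ric=\lambda g$ with $\lambda$ constant, one over the leaf directions and one over the normal directions.
\begin{itemize}
\item \textbf{Normal directions.} I compare $Ric$ with the transverse Ricci tensor through the standard O'Neill--Tondeur formula. It has the form $Ric^Q(Y,Y)=Ric(Y,Y)+2|A_Y|^2+(\text{terms in }T,\kappa)$, plus a term $g(\nabla_Y\kappa,Y)$ coming from the leafwise derivative. Tracing it and using $Ric^Q=\lambda^Q g_Q$ gives a pointwise identity
\[ q\lambda^Q = q\lambda + 3|A|^2 + \mathrm{div}_Q\kappa + (\text{quadratic terms in }T,\kappa), \]
where $q=n-p$.
\item \textbf{Leaf directions.} The Gauss equation expresses $\sum_{i\le p}Ric(e_i,e_i)$ through $S_L$, $|T|^2$, $|\kappa|^2$, $|A|^2$ and the divergence of $\kappa$. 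Since $S_L=0$, this gives $p\lambda$ as a combination of these quantities alone.
\end{itemize}
Equivalently, I can use Walczak's integral formula for the mixed scalar curvature,
\[ \mathrm{div}\,\kappa = S_{mix}+|A|^2-|T|^2+|\kappa|^2 , \]
valid because the normal bundle has zero mean curvature. This formula lets me pass between the two traces.

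Next I eliminate $|A|^2$ and $S_{mix}$ between these relations. The aim is a pointwise identity of the shape
\[ \mathrm{div}\,\kappa = |\kappa|^2 + a|T^0|^2 + c, \]
where $T^0$ is the trace-free part of $T$, $a\ge 0$, and $c$ is a constant depending only on $n,p,\lambda,\lambda^Q$. Integrating over the compact manifold $M$ kills the divergence and gives
\[ \int_M |\kappa|^2 + a|T^0|^2 = -c\,\mathrm{vol}(M). \]
I then split into cases by sign.
\begin{itemize}
\item If $c\ge 0$, then $\kappa\equiv 0$, so the leaves are minimal and $\mathcal F$ is taut.
\item If $c<0$, I want to show that such an Einstein manifold cannot carry the foliation at all, exactly as in Ranjan's Lemma 5.2.
\end{itemize}
For the second case I would exploit the leaf trace a second time. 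With $S_L=0$, it bounds $p\lambda$ from above by $-|A|^2$ plus divergence terms. This forces $\lambda\le 0$ after integration, with equality only in the degenerate situation. I then need to feed that back into $c$ and reach a contradiction, or reduce to $A=0$, $T^0=0$. In the reduced case the basic part $\kappa_B$ is closed with $\mathrm{div}\,\kappa_B=|\kappa_B|^2$ pointwise, and integrating this again gives $\kappa=0$.

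The main obstacle is this sign bookkeeping. I have to check that, after eliminating $|A|^2$ (which enters the two traces with different coefficients, $3|A|^2$ transversally versus $|A|^2$ in the mixed term), the constant $c$ really has a sign controlled by $\lambda$ and $\lambda^Q$. Otherwise the leftover $A$-term has to be absorbed with the right sign. If the direct elimination fails, my fallback is to prove the statement only for the basic component $\kappa_B$. By \'Alvarez L\'opez, tautness is equivalent to $[\kappa_B]=0$, and the integrated identity against $\kappa_B$ should suffice for that.
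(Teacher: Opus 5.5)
\textbf{There is a genuine gap.} Your central step fails, because no identity of the form $\mathrm{div}\,\kappa=|\kappa|^2+a|T^0|^2+c$ with $a\ge 0$ exists.

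First, your version of Walczak's formula has the wrong signs. For a bundle-like metric it reads $\mathrm{div}\,\kappa=S_{mix}-|A|^2+|T|^2-|\kappa|^2$. For $p=1$ this is the paper's (5.3), $Ric(U,U)=\mathrm{div}\,\kappa+|A|^2$, consistent with a unit Killing field satisfying $Ric(U,U)=|DU|^2\ge 0$.

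With the Gauss equation and $S_L=0$, the leaf trace then gives, pointwise, $p\lambda=|A|^2+\mathrm{div}\,\kappa$. Integrating forces $\lambda\ge 0$, which is the opposite of your claim $\lambda\le 0$. This is exactly the paper's Lemma 5.6; your trace reaches it more directly than the paper's Weyl-tensor contraction.

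The transverse trace is $q\lambda=q\lambda^Q-2|A|^2-|T|^2+\mathrm{div}\,\kappa+|\kappa|^2$, with coefficient $2$, not $3$. These two traces are all you have, and eliminating $|A|^2$ between them forces
\begin{equation*}
3\,\mathrm{div}\,\kappa=(2p+q)\lambda-q\lambda^Q+|T^0|^2-\tfrac{p-1}{p}|\kappa|^2 .
\end{equation*}
Here $|\kappa|^2$ and $|T^0|^2$ enter with opposite signs, so integration gives no sign information. For $p=1$ the quadratic terms disappear altogether, and you learn only $\mathrm{div}\,\kappa=0$ and $(n+1)\lambda=(n-1)\lambda^Q$. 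A divergence-free $\kappa$ need not vanish, so no bookkeeping of scalar traces can prove tautness, even for flows. Your "reduced case" identity $\mathrm{div}\,\kappa_B=|\kappa_B|^2$ is likewise unsupported. The fallback via $[\kappa_B]=0$ names a target but gives no mechanism for reaching it.

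\textbf{What is missing} is a tensorial argument on $\kappa$, which is how the paper proceeds. The scalar identity is used only to obtain $\lambda\ge 0$, and then the proof splits into two cases.

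If $\lambda\ge\lambda^Q$, then $Ric(Y,Y)-Ric^Q(Y,Y)+2\sum_\alpha|A_Y\epsilon_\alpha|^2\ge 0$ for every transverse $Y$. The Riccati comparison along horizontal geodesics (Theorem 2.10) then gives $T=0$, hence $\kappa=0$.

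If $\lambda<\lambda^Q$, then $\lambda^Q>0$ and Lemma 5.3 applies. Change only the leafwise metric, so $g_Q$ and the transverse Einstein condition are unchanged, to make $\kappa$ basic with $\mathrm{div}_B\kappa=0$. Lemma 2.7 then gives $\mathrm{div}_BT_\kappa=i_{\kappa^\sharp}Ric^Q$. Pairing with $\kappa$ and integrating yields $\lambda^Q\int_M|\kappa|^2=-\int_M|\nabla_{tr}\kappa|^2$, which forces $\kappa=0$.

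This Bochner-type step is what replaces the sign-definite identity your plan depends on.
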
 

The content of this paper consists of the following. We recall some basic calculations and backgrounds in Section 2. In section 3, some necessary preliminaries of Riccati equation with respect to the Riemnnanian foliation are recalled. In section 4, we prove Theorem 1.1 and Corollary 1.2. In the last section, Corollary 1.3 and Theorem 1.4 are proved.

The author would like to thank to Professor Seungsu Hwang and Professor Seoung Dal Jung for their helpful discussions and plenty of kind advises to complete this work.

\section{Preliminaries}

Throughout this paper, $M$ always denotes an $n(\geq 3)$-dimensional complete Riemannian manifold with metric $g$ and corresponding {Levi-Civita connection} $D$. Then for vector fields $Y,Z,V$ on $M,$ we have the following identity.
\begin{equation}
    g(D_{Z}Y,V)=\dfrac{1}{2}\left(g(D_{Z}Y,V)+g(D_{V}X,Z)\right)-\dfrac{1}{2}\left(g(D_{Z}Y,V)-g(D_{V}Y,Z)\right).
\end{equation}

We would like to focus on the former term of the right hand side in (2.1), which is called the {symmetric part} of $g(D_{Z}Y,V)$. Hereafter, we denote the symmetric part of $g(D_{Z}Y,V)$ by $\displaystyle{\dfrac{1}{2}L_Yg(Z,V)}$ by definition of the Lie derivative with respect to $Y$ (see e.g. \cite{Bes,Ton}).

Now, the {Riemann curvature} on $M$ is defined as follows.
\begin{equation}
    R(e_i,e_j,e_k,e_l)=g(D_{e_j}D_{e_i}e_k-D_{e_i}D_{e_j}e_k+D_{[e_i,e_j]}e_k,e_l),
\end{equation} where $\{e_i\}$ is a local moving frame on $M$ with respect to the metric $g$ and $[\cdot,\cdot]$ is the Lie bracket on $M$.
 Hereafter, we denote a symmetric $2$-tensor $Ric$ by the {Ricci curvature} of $M$, $S$ by the {scalar curvature} of $M$, and $z$ by the {traceless Ricci curvature} of $M$. 
 With these settings, we have the definition of {Cotton tensor} and {Weyl curvature} as follows, respectively. 

 \begin{equation}
  W=R-\left(\dfrac{1}{n-2}z\KN g+\dfrac{S}{2n(n-1)}g\KN g\right),  
 \end{equation} 
 \begin{equation}
 C=\mathrm{d}^D\left(Ric-\dfrac{S}{2(n-1)}g\right),
 \end{equation}where $\KN$ is the Kulkarni-Nomizu product defined by
 \begin{equation}
 \begin{split}
     T_1\KN T_2(e_i,e_j,e_k,e_l)&=T_1(e_i,e_k)T_2(e_j,e_l)+T_1(e_j,e_l)T_2(e_i,e_k)\\&-T_1(e_i,e_l)T_2(e_j,e_k)-T_1(e_j,e_k)T_2(e_i,e_l)
 \end{split}
 \end{equation} and $\mathrm{d}^D$ is the Codazzi operator given by
 \begin{equation}
     \mathrm{d}^DT_1(e_i,e_j,e_k)=D_{e_i}T_1(e_j,e_k)-D_{e_j}T_1(e_i,e_k)
 \end{equation}for symmetric $2$-tensors $T_1$ and $T_2$ \cite{Bes}.
 Also, to justify the later claim, we define the tensor inner product. For two $k$-tensor fields $T_1$ and $T_2$ on $M$, $g(T_1,T_2)$ is defined as follows.
\begin{equation}
    g(T_1,T_2)=\sum T_1(e_{i_1},...e_{i_k}) T_2(e_{i_1},...e_{i_k}).
\end{equation}
     
It is well known that the divergence of Weyl curvature is a constant multiple of the Cotton tensor. To be precise, we have the following formula.
\begin{equation}
C(e_k,e_l,e_j)=\dfrac{n-3}{n-2}\sum_{l=1}^nD_{e_i}W(e_i,e_j,e_k,e_l).
\end{equation}

As Weyl curvature is trace-free, it should be remarked that Cotton tensor is {trace-free} with respect to any pair of its two entries. Also, the following remark is well-known.
\begin{rmk}\normalfont
    For a Riemannian manifold $M$ of dimension $\geq 4,$ we say the Weyl curvature is harmonic if is satisfies 
    \begin{equation}
C(e_k,e_l,e_j)=\dfrac{n-3}{n-2}\sum_{l=1}^nD_{e_i}W(e_i,e_j,e_k,e_l)=0.
\end{equation} i.e. harmonic Weyl tensor is equivalent to vanishing Cotton tensor. 
\end{rmk}

Now let us recall the definition of a Einstein-type manifold. A Einstein-type manifold is a Riemannian manifold with satisfying the following condition on metric tensor.
\begin{equation}
    \alpha\text{ } Ric+\dfrac{\beta}{2}\text{ }L_Xg+\gamma \text{ }X\otimes X=\lambda\text{ } g, 
\end{equation} where $\alpha, \beta, \gamma$ are real constants, $X$ is the potential vector field of the Einstein-type manifold, and $\lambda$ is a smooth function on $M$. For later contents, we would assume that $X$ may not be conservative. That is, the given Einstein-type manifold may not be \textit{gradient}. This is because the following result of Catino et al. on a gradient Einstein-type manifold is already known \cite{CMMR}.
\begin{thm}
    Let $(M,g)$ be a complete, noncompact gradient Einstein-type manifold of dimension $\geq 3$ and let $f$ be its potential function. Assume that the inverse image $f^{-1}(a)$ for each real number $a$ is compact and neither $\beta = 0$ nor $\beta^2 = (n-2)\alpha\gamma.$
    If the Cotton tensor of $(M,g)$ vanishes, then $M$ is locally a warped product of a curve with $1$-codimensional Einstein fiber on every regular level set of $f$.
\end{thm}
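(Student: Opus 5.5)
This is the result of Catino et al. \cite{CMMR}, and I would reprove it along their lines. Write $X=\nD f$, so that $\frac12 L_Xg=\nD^2 f$ and the structure equation reads
\begin{equation*}
\alpha\,Ric+\beta\,\nD^2 f+\gamma\, df\otimes df=\lambda g .
\end{equation*}
The strategy is to use the vanishing of the Cotton tensor to show that on each regular level set of $f$ the hypersurfaces $\{f=a\}$ are totally umbilic and the gradient direction is an eigenvector of $Ric$. The local warped product structure then follows by a standard argument.

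The first step is to take the divergence of the structure equation and apply the Ricci identity $\mathrm{div}\,\nD^2 f=Ric(\nD f,\cdot)+d\Delta f$ together with the contracted Bianchi identity $\mathrm{div}\,Ric=\frac12 dS$. From the trace we get $\beta\Delta f=n\lambda-\alpha S-\gamma|\nD f|^2$. These identities express $d\lambda$ in terms of $dS$, $Ric(\nD f,\cdot)$ and $\nD^2f(\nD f,\cdot)$. Next I would compute the Codazzi operator $\mathrm{d}^D$ of the structure equation, using $\mathrm{d}^D(\nD^2 f)=$ a curvature term contracted with $\nD f$. After substituting the Weyl decomposition (2.3), this gives an identity of the form
\begin{equation*}
\alpha\, C = -\beta\, W(\nD f,\cdot,\cdot,\cdot) + D,
\end{equation*}
up to constants. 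Here $D$ is an auxiliary $3$-tensor built from $\nD f$, $Ric$ and $g$, in the spirit of Cao--Chen:
\begin{equation*}
D=\tfrac{1}{n-2}\left(df\wedge Ric\right)+\text{terms in } S\, df\wedge g \text{ and } Ric(\nD f)\wedge g .
\end{equation*}
The coefficients involve $\beta$ and $(n-2)\alpha\gamma-\beta^2$. This is exactly where the hypotheses $\beta\neq0$ and $\beta^2\neq(n-2)\alpha\gamma$ enter: they guarantee that the coefficient in front of $D$ does not vanish.

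The key step, and the main obstacle, is to deduce $D=0$ from $C=0$. I would contract the identity with $D$ itself. Since $W(\nD f,\cdot,\cdot,\cdot)$ and $D$ have the right symmetries, the cross term $g(W(\nD f,\cdot,\cdot,\cdot),D)$ should vanish pointwise by trace-freeness of $W$, leaving $|D|^2$ times a nonzero constant equal to zero. If the cross term does not vanish pointwise, I would fall back on an integral identity over the compact level sets or sublevel sets, using properness of $f$.

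Once $D=0$ holds at regular points, I would expand $|D|^2$ in an adapted frame $e_1=\nD f/|\nD f|$, $e_a$ tangent to the level set. The vanishing of $|D|^2$ forces $Ric(e_1,e_a)=0$ and forces $Ric$ restricted to the level set to be proportional to the induced metric. Via the structure equation, $\nD^2 f(e_a,e_b)$ is then a multiple of $g_{ab}$, so the level sets are totally umbilic. Moreover $e_a(|\nD f|)=\nD^2f(e_1,e_a)/\!\!\;$ is proportional to $Ric(e_1,e_a)=0$, so $|\nD f|$ is constant on level sets. A standard lemma then applies: a function with $|\nD f|$ constant on level sets and umbilic level sets gives $g=dr^2+h(r)^2 g_N$ locally. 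Finally, the Gauss equation, combined with the umbilicity and the proportionality of the tangential Ricci curvature, shows that $g_N$ is Einstein; in dimension $n-1=2$ this is automatic. Compactness of each $f^{-1}(a)$ ensures that the warped-product description is valid on a neighborhood of every regular level set.
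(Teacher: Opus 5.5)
The paper gives no proof of this statement; it is quoted from \cite{CMMR}. So the only question is whether your argument stands on its own. It does not: it breaks at exactly the step you flagged, $C=0\Rightarrow D=0$.

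What is fine: your identity $\alpha C+\beta\,\iota_{\nabla f}W=c\,D$ with $c=\frac{\beta^2-(n-2)\alpha\gamma}{\beta}\neq0$ (up to sign conventions). Your last paragraph is also the standard argument; note that once $Ric(e_1,e_a)=0$, the Codazzi equation makes the umbilicity factor constant on level sets.

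What fails: trace-freeness of $W$ only removes the $g$-terms of $D$. What survives is, up to sign,
\begin{equation*}
\langle \iota_{\nabla f}W, D\rangle=\frac{2}{n-2}\sum_{i,j}W(\nabla f,e_i,e_j,\nabla f)\,R_{ij},
\end{equation*}
and this need not vanish. More precisely, when $C=0$ the identity reads $\beta\,\iota_{\nabla f}W=cD$, and its two contractions with $\nabla f$ give the following.
\begin{itemize}
\item By antisymmetry of $W$ in its first two slots, $D(\nabla f,\cdot,\cdot)=0$, i.e. $\nabla f$ is a Ricci eigenvector.
\item For $e_a,e_b$ tangent to the level set, with $e_1=\nabla f/|\nabla f|$ and $R_{11}=Ric(e_1,e_1)$,
\begin{equation*}
\beta\,W(\nabla f,e_a,e_b,\nabla f)=\frac{c\,|\nabla f|^2}{n-2}\Big(R_{ab}-\frac{S-R_{11}}{n-1}\,\delta_{ab}\Big).
\end{equation*}
\end{itemize}
So the part of $D$ that encodes umbilicity and the Einstein condition on the fibers is exactly the radial Weyl curvature. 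For $n\geq4$, $C=0$ imposes nothing on it.

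This is not a defect of your method: for $n\geq 4$ the statement as quoted is false, so neither a pointwise nor an integral argument can produce $D=0$. Take $M=S^m\times\mathbb{R}^k$ with $m,k\geq 2$, $Ric_{S^m}=\lambda g_{S^m}$, $\lambda>0$, and $f=\frac{\lambda}{2}|x|^2$.
\begin{itemize}
\item $Ric+\nabla^2 f=\lambda g$, so $\alpha=\beta=1$, $\gamma=0$, and $\beta^2=1\neq 0=(n-2)\alpha\gamma$.
\item $M$ is complete and noncompact, and every $f^{-1}(a)$ is compact.
\item $Ric$ is parallel, so $C=0$.
\end{itemize}
Yet $Ric$ has eigenvalues $\lambda$ and $0$ with multiplicities $m,k\geq2$. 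On any warped product $dt^2+h(t)^2g_F$ with $F$ Einstein, the Ricci tensor has an eigenvalue of multiplicity at least $n-1$. Hence no neighbourhood of a regular level set $S^m\times S^{k-1}(r)$ has the claimed form; these level sets are not even umbilic, and $D\neq0$ there.

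The conclusion does hold under any of the following:
\begin{itemize}
\item $n=3$: there $W\equiv0$, and your identity gives $D=0$ at once.
\item $W=0$ for $n\geq 4$.
\item $C=0$ together with $W(\nabla f,\cdot,\cdot,\nabla f)=0$: the two contractions above then determine every component of $D$ and force $D=0$.
\item A Bach-flatness assumption, which is the setting of \cite{CMMR}. There properness of $f$ is used to integrate $B(\nabla f,\nabla f)$ by parts over sublevel sets $\{f\le t\}$ to obtain $\int|D|^2$. The boundary terms have the form $D(\nabla f,\nabla f,\nu)$ and vanish because $\nu\parallel\nabla f$.
\end{itemize}
One of these extra hypotheses must be added before your scheme can be completed.
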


\begin{rmk}\normalfont
 It should be remarked that $1$-codimensional Einstein fibers in Theorem 2.2 are integrable since $\ker df$ is closed under the Lie bracket (i.e. it is \textit{involutive}). As a consequence, the fibers are the integrable hypersurfaces on $M$ by definition of integral submanifolds. Hence, if we consider the flow $\F$ along $\nD f,$ then $\F$ has an involutive normal bundle. 
\end{rmk}

To observe Einstein-type manifolds with nonvanishing potential vector field $X$, we need to review the preliminaries of transverse geometry. Recall that a foliation is a cover of $M$ whose elements are integral submanifolds. In particular, if such submanifold is $1$-dimensional, then the given foliation forms a global flow on $M$. Hence, we need to assume that the tangent vector field of a global flow is nonvanishing. 

Throughout this paper, we assume that $X$ is a nonvanishing potential vector field and the integral curves of $X$ forms a $1$-dimensional foliation $\mathcal{F}_X$ on a compact Einstein-type manifold $M$ and let us call $\mathcal{F}_X$ the \textit{potential flow}. Also, to establish the geometry on a normal bundle of the potential flow $\mathcal{F}_X$, we recall the  definition of Riemannian foliation on a Riemannian manifold $M$ as follows.
\begin{defn}
 Let $\F$ be a foliation on a Riemannian manifold $M$. $\F$ is said to be Riemannian if all vector fields $U$ tangent to $\F$ satisfies $L_Ug(Y,Z)=0$ for any vector fields $Y,Z$ perpendicular to $\F$.  
\end{defn}
Due to the result in \cite{Rei}, any Riemannian manifold with Riemannian foliation has a \textit{bundle-like metric} $g$. That is, $g$ is decomposed as $g=g_{\F}+g_Q,$ where $g_{\F}$ is the tangential component of $\F$ and $g_Q$ is the normal component of $\F$. Therefore, the given Einstein-type manifold is also a bundle-like metric as $\mathcal{F}_X$ is assumed to be the Riemannian. 

Similar to Riemannian geometry on a differential manifold, we define \textit{transverse Levi-Civita connection} $\nD$ and \textit{transverse Riemann curvature} $R^Q$ on a manifold with Riemannian foliation $(M,\F)$ as follows.
\begin{equation}
\begin{split}
&\nD_{Y}Z= \pi([Y,Z]) \text{ if } Y \text{ is tangent to } \F\\
&\nD_{Y}Z= \pi(D_YZ)\text{ if } Y \text{ is perpendicular to } \F,
\end{split}     
\end{equation} where $\pi$ is a projection from the tangent bundle $TM$ to the normal bundle $Q$ of a foliation $\F$ and $Z$ is a vector field perpendicular to $\F$ and

\begin{equation}
    R^Q(Y,Z,V,W)=g(\nD_{Z}\nD_{Y}V-\nD_{Y}\nD_{Z}V+\nD_{[Y,Z]}V,W),
\end{equation} where $Y,Z,V,W$ are vector fields perpendicular to $\F,$ from now on.

It should be remarked that a transverse Riemann curvature $R^Q$ is a \textit{basic} tensor. That is, the interior product $i_UR^Q$ and the Lie derivative $L_UR^Q$ with respect to a vector field tangent to $U$ must vanish \cite{Ton}.

Also, by taking trace on $R^Q$ with respect to $g_Q$, we have the Ricci curvature $Ric^Q$ with respect to $R^Q$, called the \textit{transverse Ricci curvature}, and the scalar curvature $S^Q=tr_{g_Q}Ric^Q$ of $Ric^Q$, called the \text{transverse scalar curvature}. Due to their definitions, $Ric^Q$ and $S^Q$ are basic curvatures on $(M,\F)$.

To remind the divergences and mean curvature on the geometry of $Q,$ the definition of O'Neill's $T$ and $A$ tensor should be recalled (\cite{Bes,Ton}). For vector fields $Y,Z$ on a manifold with a Riemannian foliation $(M,\F)$,
\begin{equation}
    T_YZ=\pi(D_{\pi^{\perp}Y}\pi^{\perp}Z)+\pi^{\perp}(D_{\pi^{\perp}Y}\pi Z),
\end{equation} and
\begin{equation}
   A_YZ=\pi(D_{\pi Y}\pi Z)+\pi^{\perp}(D_{\pi Y}\pi^{\perp}Z),
\end{equation} where $\pi^{\perp}$ is a projection from the tangent bundle $TM$ to the tangent bundle $T\F$ of a foliation $\F$.

Hence, $T$ is regarded as the generalization of the second fundamental form of a Riemannian foliation $\F$ on $M$. In particular, we define the \textit{mean curvature} of $\F$ as follows.
\begin{equation}
    \kappa^{\sharp}=\sum_{\alpha=1}^p T_{\epsilon_\alpha}\epsilon_\alpha,
\end{equation} where $\{\epsilon_\alpha\}$ is a local moving frame on $T\F$ with respect to $g$. Accordingly, we have the corresponding $1$-form of $\kappa^{\sharp}$ and it is denoted by $\kappa$.

In particular, a Riemannian foliation is called \textit{geodesible} if a manifold with a Riemannian foliation $(M,\F)$ has a bundle-like metric satisfying $T=0.$ Similarly, a Riemannian foliation is called \textit{taut} if a manifold with a foliation has a bundle-like metric satisfying $\kappa=0.$ Therefore, any geodesible Riemannian foliation is necessarily taut and any taut foliation is geodesible if the foliation is $1$-dimensional.

In general, tautness of a given Riemannian foliation on a compact Riemannian manifold is an important geometric property, as the transverse divergence does not coincide with the formal adjoint of $\nD$. Therefore, there are two divergences on the geometry of $(Q,g_Q)$ of a foliated manifold $(M,\F)$ with a Riemannian foliation, as follows.

\begin{equation}
    \text{div}_Q=\sum_{i=1}^qi_{e_i}\circ\nD_{e_i},
\end{equation} is called the \textit{transverse divergence}, where $\{e_i\}$ is a local moving frame on $Q$ with respect to $g$ and
 
\begin{equation}
    \text{div}_B=\text{div}_Q-i_{\kappa^{\sharp}},
\end{equation} is called the \textit{basic divergence,} which is identified with the divergence on $(M,g)$ for any basic tensor fields \cite{AL}. 

\begin{lem}\cite{AL,Ton}
 For any Riemannian foliation $\F$ on a compact Riemannian manifold $M$, we may assume the mean curvature $\kappa$ of $\F$ be basic and $\text{div}_B\kappa=0$. Hence, the cohomology class of $\kappa$ vanishes if and only if $\F$ is taut.
\end{lem}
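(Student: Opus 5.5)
The statement is the classical result of Álvarez López: the mean curvature $\kappa$ can be made basic, its basic part is closed, and the class $[\kappa]$ is an obstruction to tautness. I will follow his argument and cite Tondeur for the standard identities. The plan has three steps.

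\textbf{Step 1: make $\kappa$ basic.} Take the basic de Rham complex $\Omega_B(\F)$ with its $L^2$ inner product induced from $g$; this uses the compactness of $M$. Let $\kappa_b$ be the $L^2$-orthogonal projection of $\kappa$ onto the closure of $\Omega^1_B$. The key input is Álvarez López's theorem that this projection preserves smoothness, so $\kappa_b$ is a smooth basic $1$-form. Then I modify the bundle-like metric along the leaves only, leaving $g_Q$ unchanged. Concretely, I rescale the leafwise volume form $\chi_\F$ by a suitable function, which changes the characteristic form by a factor. Since $\kappa$ is determined by Rummler's formula $d\chi_\F = -\kappa\wedge\chi_\F + \varphi_0$, the factor can be chosen so that the new mean curvature equals $\kappa_b$. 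Because $g_Q$ is untouched, the metric stays bundle-like.

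\textbf{Step 2: closedness and $\mathrm{div}_B\kappa=0$.} Once $\kappa$ is basic, Rummler's formula together with the Riemannian property gives $d\kappa=0$ (Tondeur, Ch.~7). Next, a Green-type argument on compact $M$ shows $\int_M \mathrm{div}_B\kappa \,\mu_g = 0$. To get $\mathrm{div}_B\kappa=0$ pointwise, I would rescale the leafwise metric by a further basic conformal factor $e^{h}$. This changes $\kappa$ to $\kappa + dh$, up to a constant factor depending on the leaf dimension, and changes $\mathrm{div}_B$ by a transverse Laplacian term. Solving the resulting basic Poisson equation $\Delta_B h = \mathrm{div}_B\kappa$ is possible because the right-hand side has zero mean and $\Delta_B$ is Fredholm on basic functions. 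This step gives the normalization $\mathrm{div}_B\kappa=0$, i.e.\ $\kappa$ is basic-harmonic.

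\textbf{Step 3: the equivalence.} If $\F$ is taut, the bundle-like metric with $\kappa=0$ shows the class vanishes. The class is independent of the bundle-like metric, since changing the leafwise metric alters $\kappa_b$ by an exact basic form. Conversely, if $[\kappa]=0$ in $H^1_B$, write $\kappa=dh$ with $h$ basic. Rescaling the leafwise metric by $e^{-ch}$ for the appropriate constant $c$ kills $\kappa$, so $\F$ is taut. With the harmonic normalization from Step 2 this is immediate, since a basic-harmonic exact form vanishes.

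I expect the main obstacle to be Step 1, namely showing that the basic projection of $\kappa$ is smooth and can be realized by an actual change of leafwise metric. This is the deep analytic part of Álvarez López's work, and I would quote it rather than reprove it.
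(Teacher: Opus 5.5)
The paper gives no proof of this lemma; it simply quotes \cite{AL,Ton}. Judged on its own, your sketch has a genuine gap in Step~1.

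If you keep $Q$ and $g_Q$ fixed and change only the leafwise metric, the new characteristic form is $e^{f}\chi_{\F}$ for some function $f$. Rummler's formula then gives exactly
\[
\kappa'=\kappa-d_Qf,
\]
where $d_Qf$ is the $Q$-component of $df$. So ``choosing the factor so that the new mean curvature equals $\kappa_b$'' means solving the overdetermined system $d_Qf=\kappa-\kappa_b$. Nothing you invoke, neither Rummler's formula nor smoothness of the $L^2$ projection, produces such an $f$.

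This is also not what \'Alvarez L\'opez proved. \cite{AL} gives smoothness of $\kappa_b$, $d\kappa_b=0$, independence of $[\kappa_b]$ from the bundle-like metric, and the tautness criterion. It does not show that $\kappa$ itself can be made basic. The existence of a bundle-like metric with basic mean curvature is Dom\'{\i}nguez's tenseness theorem (Amer.\ J.\ Math., 1998), a later and substantially harder result. That is the result you would have to quote for Step~1.

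Step~2 is also incorrect as written. For basic $h$ and $\chi'_{\F}=e^{h}\chi_{\F}$ you get $\kappa'=\kappa-dh$ with $\text{div}_Q$ unchanged. However, $\text{div}'_B=\text{div}_Q-i_{\kappa'^{\sharp}}$ changes as well, so the condition $\text{div}'_B\kappa'=0$ reads
\[
\text{div}_B(dh)-g(\kappa,dh)+|dh|^2=\text{div}_B\kappa .
\]
This is not a Poisson equation, and the zero mean of $\text{div}_B\kappa$ is not the solvability criterion. Substituting $u=e^{h}$ linearizes it to $\text{div}_B(du-u\kappa)=0$, and what you need is a \emph{positive} basic solution $u$. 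The adjoint operator $v\mapsto\text{div}_B(dv)+g(\kappa,dv)$ has only constants in its kernel by the maximum principle. The Fredholm alternative then makes the kernel of $u\mapsto\text{div}_B(du-u\kappa)$ one-dimensional, and you must still show it is spanned by a positive function.

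Finally, in Step~3 the direction ``taut $\Rightarrow[\kappa]=0$'' requires the class to be invariant under \emph{all} bundle-like metrics. A minimal metric may have a different transverse metric and normal bundle from the given one. Your justification covers only leafwise changes, so this is exactly where the invariance theorem of \cite{AL} must be invoked.
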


It should be also remarked that a basic mean curvature $\kappa$ of a Riemannian foliation on compact manifolds should be a closed $1$-form \cite{Ton}. Therefore, the cohomology class $[\kappa]$ is well-defined.

There is an another way to determine tautness of a Riemannian foliation. For later use, we define the following basic symmetric $2$-tensor, called the \textit{symmetric tautness tensor}, as follows. 
\begin{equation}
    T_{\kappa}=\nD_{tr}\kappa-\kappa\otimes\kappa,
\end{equation} where $\nD_{tr}\kappa$ is the tensor derivative of $\kappa$ only on $Q.$ Then, the following property is proven.
\begin{lem}\cite{Moo}
Assume that the mean curvature $\kappa$ of a Riemannian foliation $\F$ is basic and $\text{div}_B\kappa=0$ on a compact manifold $M$. Then $\F$ is taut if and only if $T_{\kappa}=0.$
\end{lem}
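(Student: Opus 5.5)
The plan is to prove the two implications separately. In both directions the hypotheses that $\kappa$ is basic and $\text{div}_B\kappa=0$ let us treat $\kappa$ as a basic harmonic-type $1$-form. Throughout, recall that $\kappa$ is closed (as remarked after Lemma 2.5), so $\nD_{tr}\kappa$ is symmetric and $T_\kappa$ is a genuine basic symmetric $2$-tensor.

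\emph{Taut $\Rightarrow T_\kappa=0$.} By Lemma 2.5, tautness means the basic class $[\kappa]$ vanishes, so $\kappa=dh$ for a basic function $h$. The normalization $\text{div}_B\kappa=0$ then says $\text{div}_B dh=0$. Since $\text{div}_B$ agrees with the divergence on $(M,g)$ for basic tensors, we may integrate over the compact manifold $M$:
\begin{equation*}
0=\int_M h\,\text{div}_B(dh)=-\int_M |dh|^2 .
\end{equation*}
Hence $\kappa=dh=0$, and therefore $T_\kappa=\nD_{tr}\kappa-\kappa\otimes\kappa=0$ trivially.

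\emph{$T_\kappa=0\Rightarrow$ taut.} A naive first attempt is to take the trace of $T_\kappa$. This gives $\text{div}_Q\kappa-|\kappa|^2=\text{div}_B\kappa$, which vanishes by hypothesis, so it yields no information. This is the main obstacle: one must extract information from the full tensor identity $\nD_{tr}\kappa=\kappa\otimes\kappa$, not just its trace.

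Instead, I would contract this identity with $\kappa^\sharp$. For any normal $Y$,
\begin{equation*}
Y(|\kappa|^2)=2g_Q(\nD_Y\kappa^\sharp,\kappa^\sharp)=2(\nD_{tr}\kappa)(Y,\kappa^\sharp)=2|\kappa|^2\kappa(Y).
\end{equation*}
The function $u=|\kappa|^2$ is basic, so its derivative along the leaves is zero, as is that of $\kappa$. Hence
\begin{equation*}
du=2u\,\kappa \quad \text{on all of } M .
\end{equation*}

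Now use compactness. Let $p$ be a point where $u$ attains its maximum. There $du_p=0$, so $u(p)\kappa_p=0$, which gives $u(p)^{3/2}=|u(p)\kappa_p|=0$. Hence $\max u=0$, so $\kappa\equiv0$, and $\F$ is taut (indeed minimal for the given bundle-like metric). Apart from the contraction step, I expect only routine checks, such as the symmetry of $\nD_{tr}\kappa$ coming from $d\kappa=0$.
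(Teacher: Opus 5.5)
Your proof is correct. The paper gives no argument of its own for this lemma; it is simply quoted from \cite{Moo}. So there is nothing in the text to compare step by step, and your write-up supplies a self-contained proof.

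In the forward direction you combine Lemma 2.5 ($\kappa=dh$ with $h$ basic) with the normalization $\text{div}_B\kappa=\text{div}\,\kappa=0$. An exact, co-closed form on the closed manifold $M$ must vanish, so $\kappa=0$. This is the one place the normalization is genuinely used, and it cannot be dropped there. For example, take $T^2$ with $g=dx^2+e^{2h(x)}dy^2$, foliated by the $y$-circles. The flow is taut and $\kappa=-dh$ is basic, but $T_\kappa=-(h''+h'^2)\,dx^2\neq 0$ for nonconstant $h$.

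In the converse direction, your contraction $d|\kappa|^2=2|\kappa|^2\kappa$ followed by the maximum-point argument is pointwise. It needs no integration and makes no use of $\text{div}_B\kappa=0$; as your trace computation shows, $T_\kappa=0$ implies that anyway. The integral style the paper uses in nearby arguments (Lemmas 2.7 and 5.3) would pair $\tfrac12 d|\kappa|^2=|\kappa|^2\kappa$ with $\kappa$ and integrate, using $\text{div}\,\kappa=0$, to get $\int_M|\kappa|^4=0$. That also works. Your route makes clearer that, under the stated hypotheses, tautness, $T_\kappa=0$, and $\kappa\equiv 0$ for the given metric are all equivalent.
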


It should be cautious that we may adjust the metric of leaves to obtain $\text{div}_B\kappa=0$ with $g_Q$ fixed. Thus, if we fix the metric of a compact Riemannian manifold with a Riemannian foliation $(M,\F,g)$, then the mean curvature $\kappa$ of $\F$ may not be basic. Furthermore, $\F$ may not satisfy $\text{div}_B\kappa=0$ although if we additionally assume that $\kappa$ is basic. As a consequence, the basic divergence of the symmetric tautness tensor $\text{div}_BT_{\kappa}$ is calculated by the following (cf. \cite{Moo}).

\begin{lem} Let $(M,\F,g)$ be a compact Riemannian manifold with a Riemannian foliation. If the mean curvature $\kappa$ of $\F$ is basic, then the following equation is computed. 
    \begin{equation}
    \text{div}_B(T_{\kappa})=i_{\kappa^{\sharp}}Ric^Q+d(\text{div}_B\kappa)-(\text{div}_B\kappa)\kappa.
\end{equation}
\end{lem}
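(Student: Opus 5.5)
We prove the identity by a direct local computation in a transverse orthonormal frame, in the same way as in \cite{Moo}. Fix a point $x\in M$ and a local orthonormal basic frame $\{e_i\}_{i=1}^q$ of $Q$ with $(\nD e_i)_x=0$. Because $\kappa$ is basic, $\nD_{tr}\kappa$ and $\kappa\otimes\kappa$ are basic symmetric $2$-tensors, so $T_\kappa$ is basic. Hence $\text{div}_B T_\kappa=\text{div}_Q T_\kappa-i_{\kappa^\sharp}T_\kappa$ is a basic $1$-form, and it suffices to evaluate it on a transverse vector $e_j$.

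We split $T_\kappa$ into its two pieces and treat each separately.

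\emph{The term $\nD_{tr}\kappa$.} Write $\text{div}_Q(\nD_{tr}\kappa)(e_j)=\sum_i(\nD_{e_i}\nD_{tr}\kappa)(e_i,e_j)$. Since $\kappa$ is closed (it is basic on a compact manifold, as recalled after Lemma 2.5), $\nD_{tr}\kappa$ is symmetric. We may therefore swap the indices to get $\sum_i(\nD^2\kappa)(e_i,e_j,e_i)$. The transverse Ricci identity commutes the two derivatives and gives
\[
\sum_i(\nD^2\kappa)(e_j,e_i,e_i)+Ric^Q(\kappa^\sharp,e_j)=d(\text{div}_Q\kappa)(e_j)+Ric^Q(\kappa^\sharp,e_j),
\]
up to the sign convention (2.12). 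This commutation is legitimate because $\nD$ is torsion-free and metric on $Q$ when acting on basic objects, and its curvature is $R^Q$. Finally, $\text{div}_Q\kappa=\text{div}_B\kappa+|\kappa|^2$, so the first term becomes $d(\text{div}_B\kappa)+d|\kappa|^2$.

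\emph{The term $\kappa\otimes\kappa$.} Here $\text{div}_Q(\kappa\otimes\kappa)=(\text{div}_Q\kappa)\kappa+\nD_{\kappa^\sharp}\kappa$. By closedness, $\nD_{\kappa^\sharp}\kappa=\tfrac12 d|\kappa|^2$.

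\emph{The $i_{\kappa^\sharp}$ corrections.} We have $i_{\kappa^\sharp}\nD_{tr}\kappa=\tfrac12 d|\kappa|^2$, again by symmetry, and $i_{\kappa^\sharp}(\kappa\otimes\kappa)=|\kappa|^2\kappa$.

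Collecting the four contributions gives
\begin{align*}
\text{div}_BT_\kappa&=i_{\kappa^\sharp}Ric^Q+d(\text{div}_B\kappa)+d|\kappa|^2-\tfrac12 d|\kappa|^2\\
&\quad-(\text{div}_B\kappa+|\kappa|^2)\kappa-\tfrac12 d|\kappa|^2+|\kappa|^2\kappa.
\end{align*}
The $d|\kappa|^2$ terms cancel, as do the $|\kappa|^2\kappa$ terms, and what remains is exactly (2.18).

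The main obstacle is the commutation step in the $\nD_{tr}\kappa$ term. One must justify that second transverse covariant derivatives of a basic $1$-form obey the Ricci identity with $R^Q$. This requires the brackets $[e_i,e_j]$ to be handled through $\nD_{[e_i,e_j]}$, including the component of the bracket tangent to the leaves. Since $\kappa$ is basic, $\nD_U\kappa=L_U\kappa=0$ for $U$ tangent to $\F$, so that leafwise component drops out. The other point needing care is the sign convention in (2.12), so that the curvature term comes out as $+i_{\kappa^\sharp}Ric^Q$.
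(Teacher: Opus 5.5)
Your proof is correct, and it reaches (2.18) by a more elementary route than the paper. The paper does not commute derivatives itself. It quotes Jung's transverse Weitzenb\"ock formula $\Delta_B\kappa=-\text{div}_B(\nD_{tr}\kappa)+i_{\kappa^{\sharp}}Ric^Q+L_{\kappa^{\sharp}}\kappa-\nD_{\kappa^{\sharp}}\kappa$, uses $d\kappa=0$ to write $\Delta_B\kappa=-d(\text{div}_B\kappa)$, and evaluates $L_{\kappa^{\sharp}}\kappa=d|\kappa|^2$ by Cartan's formula and $\nD_{\kappa^{\sharp}}\kappa=\tfrac12 d|\kappa|^2$ by closedness. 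This gives $\text{div}_B(\nD_{tr}\kappa)=i_{\kappa^{\sharp}}Ric^Q+d(\text{div}_B\kappa)+\tfrac12 d|\kappa|^2$, after which the $\kappa\otimes\kappa$ part is subtracted exactly as you do.

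You instead prove the Bochner identity $\text{div}_Q(\nD_{tr}\kappa)=d(\text{div}_Q\kappa)+i_{\kappa^{\sharp}}Ric^Q$ directly and convert $\text{div}_Q$ to $\text{div}_B$ by hand. Your first term together with your $i_{\kappa^{\sharp}}\nD_{tr}\kappa$ correction yields precisely the paper's intermediate identity above.

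Your version buys self-containedness and transparent signs. For basic tensors, $\text{div}_Q$ is just the ordinary divergence on a local quotient manifold of the foliation, so your identity is literally the classical Bochner formula there. This also settles cleanly the bracket and sign concerns you flag at the end. It further shows that the correction terms $L_{\kappa^{\sharp}}\kappa-\nD_{\kappa^{\sharp}}\kappa$ in Jung's formula are nothing but the $\text{div}_Q$-versus-$\text{div}_B$ bookkeeping. The paper's version is shorter but rests on an external formula and its conventions for $\Delta_B$.
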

\begin{proof}
    Recalling the work of Jung\cite{Jun}, we have the transverse Weitzenb\"ock formula as follows.
    \begin{equation}
        \Delta_B\kappa=-\text{div}_B(\nD_{tr}\kappa)+i_{\kappa^{\sharp}}Ric^Q+L_{\kappa^{\sharp}}\kappa-\nD_{\kappa^{\sharp}}\kappa,
    \end{equation} where $\Delta_B$ is the basic laplacian, defined by the composition of the exterior differential and the divergence as follows.
    \begin{equation}
         \Delta_B=-\text{div}_B\circ d-d\circ \text{div}_B.
    \end{equation} 
 Then, the following is implied, since $d\kappa=0$ and the Cartan formula holds.
 \begin{equation}
        -d(\text{div}_B\kappa)=-\text{div}_B(\nD_{tr}\kappa)+i_{\kappa^{\sharp}}Ric^Q+d|\kappa|^2-\dfrac{1}{2}d|\kappa|^2.
    \end{equation}
Thus, we obtain the computation of $\text{div}_BT_{\kappa}$ as follows. 
\begin{equation}
    \text{div}_BT_{\kappa}=i_{\kappa^{\sharp}}Ric^Q+\dfrac{1}{2}d|\kappa|^2+d(\text{div}_B\kappa)-(\text{div}_B\kappa)\kappa-\nD_{\kappa^{\sharp}}\kappa.
\end{equation}
That is, \begin{equation}
    \text{div}_BT_{\kappa}=i_{\kappa^{\sharp}}Ric^Q+d(\text{div}_B\kappa)-(\text{div}_B\kappa)\kappa,
\end{equation} as desired.
\end{proof}

Specifically, the tautness condition for Riemannian flows(i.e. $1$-dimensional Riemannian foliation) on compact manifolds are well-known, by the following result of Carri\`ere.

\begin{lem}\cite{Car, Ton}
    Let $\F$ be a Riemannian flow on a compact manifold $M$. Then $\F$ is taut if and only if $\F$ is generated by a nonvanishing Killing field.
\end{lem}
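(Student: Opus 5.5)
The plan is to prove both implications directly from the definitions. The key device is to renormalize the tangent direction of the flow so that the generator becomes a unit vector field, while keeping the bundle-like structure $g=g_{\F}+g_Q$ from \cite{Rei}. Throughout, I take $\F$ to be oriented, i.e.\ generated by a nonvanishing vector field, as it is for the potential flow $\F_X$. For a non-orientable flow I would pass to the orientation double cover, where the lift is oriented and tautness is unaffected.

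\emph{Killing $\Rightarrow$ taut.} Let $X$ be a nonvanishing Killing field for some metric $g$ generating $\F$.
\begin{itemize}
\item First, $\F$ is Riemannian for $g$: since $L_Xg=0$ and $X$ preserves $X^{\perp}$, it preserves $g_Q$.
\item Define $g'=g_Q+|X|_g^{-2}\,g_{\F}$, which makes $X$ a unit field. The flow of $X$ preserves $g_Q$, the splitting $TM=T\F\oplus Q$ and the function $|X|_g^2$ (because $X(|X|^2)=L_Xg(X,X)=0$). Hence $L_Xg'=0$, so $g'$ is again bundle-like and $X$ is a unit Killing field for $g'$.
\item For a Killing field one has $D'_XX=-\tfrac12\,\mathrm{grad}'|X|^2_{g'}=0$. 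So the leaves are $g'$-geodesics, $T=0$, and in particular $\kappa=0$. Thus $\F$ is taut.
\end{itemize}

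\emph{Taut $\Rightarrow$ Killing.} Choose a bundle-like metric $g$ with $\kappa=0$. Since $\F$ is $1$-dimensional, this means the leaves are geodesics, as noted after the definition of tautness. Let $X$ be the unit tangent field of $\F$; it is nonvanishing. I then check $L_Xg=0$ on each block of the splitting, for $Y,Z\in\Gamma(Q)$:
\begin{itemize}
\item $L_Xg(Y,Z)=0$ is exactly the Riemannian condition in Definition 2.4.
\item $L_Xg(X,X)=2g(D_XX,X)=X(|X|^2)=0$.
\item $L_Xg(X,Y)=g(D_XX,Y)+g(D_YX,X)$. Here $g(D_YX,X)=\tfrac12Y(|X|^2)=0$, and $g(D_XX,Y)=g(T_XX,Y)=\kappa(Y)=0$.
\end{itemize}
Hence $X$ is a nonvanishing Killing field generating $\F$.

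The main obstacle is the rescaling step in the first direction. One has to verify carefully that $g'$ is still bundle-like and that $X$ remains Killing after the tangential metric is changed. This rests on $X$ preserving the function $|X|^2$ and the orthogonal splitting. The orientation issue in the second direction is a minor technical point, handled by the double cover.
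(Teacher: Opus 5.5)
The paper gives no proof of this lemma. It is quoted from Carri\`ere and Tondeur and used as a black box; only the direction Killing $\Rightarrow$ taut is invoked, at the end of the proof of Theorem 4.1.

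Your argument is a correct, self-contained substitute, and it works directly with the paper's own definition of tautness (a bundle-like metric with $\kappa=0$). Its core is this: for the unit field $X$ of a Riemannian flow with bundle-like $g$, $L_Xg$ vanishes on $Q\times Q$ and on $(X,X)$, while $L_Xg(X,Y)=\kappa(Y)$ for $Y\in\Gamma(Q)$. So a unit tangent field is Killing exactly when $\kappa=0$.

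Your rescaling step is genuinely necessary, not cosmetic. If $X$ is Killing for $g$ but $|X|_g$ is not constant, then with $U=X/|X|_g$ one gets $D_UU=|X|_g^{-2}D_XX=-\mathrm{grad}\log|X|_g$, i.e.\ $\kappa_g=-d\log|X|_g$. This is exact but in general nonzero, and passing to $g'=g_Q+|X|_g^{-2}g_{\F}$ is exactly what removes it.

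You never use compactness, which is consistent: with the bundle-like definition, the equivalence is a pointwise computation plus this rescaling. What the citation adds is the link to weaker formulations of tautness (an arbitrary metric making the leaves minimal, or the cohomological criterion of Lemma 2.5). The paper's use of the lemma does not need these.

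One correction: drop the double-cover remark, because it does not repair anything. The lemma is false for tangentially non-orientable flows. On the flat Klein bottle $\mathbb{R}^2/\langle (x,y)\mapsto(x+\tfrac12,-y),\ (x,y)\mapsto(x,y+1)\rangle$, the foliation by the images of the vertical lines is Riemannian with geodesic leaves for the flat metric, hence taut. Yet it admits no nonvanishing tangent vector field at all. On the torus double cover $\partial_y$ is a Killing generator, but the deck involution sends it to $-\partial_y$, so it does not descend.

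So orientability must be part of the hypothesis. It is built into Carri\`ere's notion of a flow and holds automatically for $\F_X$. In the direction Killing $\Rightarrow$ taut it comes for free, since a nonvanishing generator orients the leaves.
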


On the other hand, the following observation is straightforward. 
\begin{thm}\cite{Bes,Ton}
    Let $Y,Z$ be basic vector fields perpendicular to the given foliation $\F$ on a Riemannian manifold $(M,\F).$ Then we have the following.
    \begin{equation}
        A_YZ=\dfrac{1}{2}\pi^{\perp}[Y,Z].
    \end{equation}
\end{thm}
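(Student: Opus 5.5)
The plan is to split $D_YZ$ into its antisymmetric and symmetric parts in $Y,Z$ and project onto $T\F$. I read $A_YZ$ for $Y,Z$ perpendicular to $\F$ as O'Neill's integrability tensor, namely the component of $D_YZ$ tangent to $\F$. This is the reading under which the right-hand side $\frac12\pi^{\perp}[Y,Z]$ makes sense. So the claim to prove is $\pi^{\perp}(D_YZ)=\frac12\pi^{\perp}[Y,Z]$.

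First, since $D$ is torsion-free, $D_YZ-D_ZY=[Y,Z]$. Projecting with $\pi^{\perp}$ gives
\begin{equation*}
\pi^{\perp}(D_YZ)=\frac12\pi^{\perp}[Y,Z]+\frac12\pi^{\perp}\left(D_YZ+D_ZY\right).
\end{equation*}
It therefore suffices to show that the symmetric part $\pi^{\perp}(D_YZ+D_ZY)$ vanishes. Equivalently, $g(D_YZ+D_ZY,U)=0$ for every vector field $U$ tangent to $\F$.

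Second, I would prove this vanishing using the Riemannian (bundle-like) condition from Definition 2.4. Since $g(Z,U)=g(Y,U)=0$, metric compatibility gives
\begin{equation*}
g(D_YZ,U)=-g(Z,D_YU),\qquad g(D_ZY,U)=-g(Y,D_ZU).
\end{equation*}
Hence $g(D_YZ+D_ZY,U)=-\left(g(D_YU,Z)+g(Y,D_ZU)\right)=-(L_Ug)(Y,Z)$. Here I use the identity $(L_Ug)(Y,Z)=g(D_YU,Z)+g(Y,D_ZU)$, which follows from the symmetric-part description of $L_Ug$ in (2.1). By Definition 2.4, $(L_Ug)(Y,Z)=0$ for $U$ tangent and $Y,Z$ perpendicular to $\F$, so the symmetric part vanishes and the formula follows.

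The main obstacle is bookkeeping rather than analysis. One must check that the projection conventions $\pi$ and $\pi^{\perp}$ in the displayed definition of $A$ match the tangential-component reading used above. One must also make sure that only the sign-independent identity $(L_Ug)(Y,Z)=g(D_YU,Z)+g(Y,D_ZU)$ is used. The basicness of $Y,Z$ is not actually needed for the pointwise identity beyond $Y,Z$ being sections of $Q$. It does, however, guarantee that $\pi^{\perp}[Y,Z]$ is the natural obstruction to integrability of $Q$, which is how the result will be used later: $A=0$ if and only if the normal bundle is involutive.
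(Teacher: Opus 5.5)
Your proof is correct, and it is the standard O'Neill argument that the paper relies on by citing Besse and Tondeur without giving a proof. Torsion-freeness reduces the claim to skew-symmetry of $\pi^{\perp}D$ on $Q\times Q$, and your identity $g(D_YZ+D_ZY,U)=-(L_Ug)(Y,Z)$ together with Definition 2.4 gives this for all sections of $Q$, so you are right that basicness is not needed. (The textbook argument shows $A_XX=0$ for basic $X$ and then polarizes; that is your computation with $Y=Z$.)

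Your caveat about conventions resolves in your favour. The outer projections $\pi$ and $\pi^{\perp}$ in display (2.14) are interchanged relative to O'Neill's definition: read literally, (2.14) gives $A_YZ=\pi(D_YZ)\in Q$, and the statement would then fail. The paper's later use of $A$, e.g.\ in the transverse Gauss equation (2.27), is exactly your reading $A_YZ=\pi^{\perp}(D_YZ)$.
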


Thus, a Riemannian foliation $\F$ on $M$ has an involutive normal bundle $Q$ if and only if $A$ vanishes. Also, we should recall the Gauss equation and transverse Gauss equation for later use.
\begin{equation} R(\epsilon_{\alpha},\epsilon_{\beta},\epsilon_{\gamma},\epsilon_{\delta})=R^L(\epsilon_{\alpha},\epsilon_{\beta},\epsilon_{\gamma},\epsilon_{\delta})-g(T_{\epsilon_{\alpha}}\epsilon_{\gamma},T_{\epsilon_{\beta}}\epsilon_{\delta})+g(T_{\epsilon_{\alpha}}\epsilon_{\delta},T_{\epsilon_{\beta}}\epsilon_{\gamma}),
\end{equation}
\begin{equation} \begin{split}
    R(Y,Z,V,W)&=R^Q(Y,Z,V,W)-2g(A_YZ,A_VW)\\&+g(A_ZV,A_YW)-g(A_YV,A_ZW),
\end{split}
\end{equation} where $R^L$ is the Riemann curvature with respect to the leaves of a Riemannian foliation $\F$ on $M$. In particular, $R$ on $T\F$ is identified with $R^L$ on a totally geodesic Riemannian foliation, i.e. a foliation with $T=0$. 

Also, on a Riemannian foliation, the following condition should be recalled.
\begin{thm}\cite{Ton}
    Let $M$ be a complete Riemannian manifold with a Riemannian foliation $\F$. If for each vector field $Y$ orthogonal to $\F$ satisfies 
    \begin{equation}
     Ric^L(Y,Y)=\sum_{\alpha=1}^pR(Y,\epsilon_{\alpha},Y,\epsilon_{\alpha}) \geq g(A_Y\epsilon_{\alpha},A_Y\epsilon_{\alpha})
    \end{equation} then $T=0$.
\end{thm}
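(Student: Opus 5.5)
The plan is to reduce the vanishing of $T$ to a scalar Riccati inequality along horizontal geodesics, and then to use completeness: a solution of such an inequality that exists for all time has to vanish. The first step is to note the standard fact for Riemannian foliations with a bundle-like metric: a geodesic that is orthogonal to $\F$ at one point stays orthogonal to $\F$ everywhere. Since $M$ is complete, every unit vector $Y$ orthogonal to $\F$ at a point $x$ therefore gives a horizontal unit-speed geodesic $\gamma:\mathbb{R}\to M$ with $\gamma(0)=x$ and $\gamma'(0)=Y$.

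Along $\gamma$ I will consider the field of symmetric endomorphisms $S(t)$ of $T\F_{\gamma(t)}$ defined by $g(S(t)U,V)=g(T_U V,\gamma'(t))$. This is the shape operator of the leaves in the direction $\gamma'$. Its trace $s(t)=\mathrm{tr}\,S(t)$ equals, up to sign, $\kappa(\gamma'(t))$.

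The key computation, which I expect to be the main obstacle, is the Riccati equation for $S$. It comes from differentiating $T$ along $\gamma$, using the O'Neill identities (2.13)--(2.14) and the curvature formulas such as (2.27)--(2.28), and fixing signs to match the curvature convention (2.2). With suitable orientation of $S$ it should read
\begin{equation}
S'(t)=S(t)^2+\mathcal{R}_{\gamma'}-\mathcal{A}_{\gamma'},
\end{equation}
where $g(\mathcal{R}_{\gamma'}U,U)=R(\gamma',U,\gamma',U)$ is the mixed curvature and $g(\mathcal{A}_{\gamma'}U,U)=g(A_{\gamma'}U,A_{\gamma'}U)$ is the O'Neill correction term. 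I will take this from the preliminaries on the Riccati equation for Riemannian foliations recalled in Section 3, checking the sign carefully. Taking traces gives
\begin{equation}
s'=\mathrm{tr}(S^2)+Ric^L(\gamma',\gamma')-\sum_{\alpha}|A_{\gamma'}\epsilon_\alpha|^2\ \ge\ \mathrm{tr}(S^2)\ \ge\ \frac{s^2}{p},
\end{equation}
using the hypothesis and the Cauchy--Schwarz inequality $\mathrm{tr}(S^2)\ge(\mathrm{tr}S)^2/p$.

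Next I will run the comparison argument. Suppose $s(t_0)>0$. Then $s$ dominates the solution of $u'=u^2/p$ with $u(t_0)=s(t_0)$, and that solution blows up in finite forward time. This contradicts the fact that $s$ is smooth on all of $\mathbb{R}$. If $s(t_0)<0$, the same argument in backward time gives a contradiction. Hence $s\equiv 0$. Then $0=s'\ge \mathrm{tr}(S^2)\ge 0$, so $\mathrm{tr}(S^2)=0$, and since $S$ is symmetric, $S\equiv 0$ along $\gamma$. In particular $g(T_U V,Y)=0$ for all $U,V$ tangent to $\F$ at $x$.

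The point $x$ and the horizontal direction $Y$ were arbitrary. The component $\pi(T_UV)$ of the second fundamental form therefore vanishes everywhere. The remaining component $\pi^{\perp}(T_U Y)$ is determined by this one through the skew-symmetry of $T_U$, so $T=0$.
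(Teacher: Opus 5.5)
The paper does not prove this statement; it only quotes it from \cite{Ton}. Your argument is correct. It is the standard Riccati comparison argument along horizontal geodesics, and it covers everything needed:
\begin{itemize}
\item horizontal geodesics are complete and stay horizontal for a bundle-like metric;
\item $s=\kappa(\gamma')$ satisfies $s'\ge \mathrm{tr}(S^2)\ge s^2/p$ on all of $\mathbb{R}$, so blow-up in forward or backward time forces $s\equiv 0$;
\item hence $\mathrm{tr}(S^2)\equiv 0$, so $S\equiv 0$, and skew-symmetry of $T_U$ gives $T=0$.
\end{itemize}
The signs in your $S'=S^2+\mathcal{R}_{\gamma'}-\mathcal{A}_{\gamma'}$ are also correct for the paper's curvature convention (2.2).

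On the step you called the main obstacle: you only use the trace of the Riccati equation, and that trace is already in the paper. Evaluate (5.30) at $Y=Z=\gamma'$ and use three facts:
\begin{itemize}
\item along the geodesic, $\frac{1}{2}L_{\kappa^{\sharp}}g(\gamma',\gamma')=g(D_{\gamma'}\kappa^{\sharp},\gamma')=\frac{d}{dt}\kappa(\gamma')$;
\item $\sum_\alpha|T_{\epsilon_\alpha}\gamma'|^2=\mathrm{tr}(S^2)$;
\item the horizontal trace of (2.27) gives $Ric(\gamma',\gamma')-Ric^Q(\gamma',\gamma')=Ric^L(\gamma',\gamma')-3\sum_\alpha|A_{\gamma'}\epsilon_\alpha|^2$.
\end{itemize}
Together these give exactly
\[
s'=\mathrm{tr}(S^2)+Ric^L(\gamma',\gamma')-\sum_{\alpha}|A_{\gamma'}\epsilon_\alpha|^2 .
\]
Equation (3.1) in Section 3 is only this Ricci-level (vertically traced) identity, written for flows. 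So the endomorphism-valued equation cannot literally be read off from Section 3, but your proof does not need it.
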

It should be remarked that (2.28) could be replaced with the following, by (2.27).
\begin{equation}
     Ric(Y,Y)-Ric^Q(Y,Y)+2\sum_{\alpha=1}^pg(A_Y\epsilon_{\alpha},A_Y\epsilon_{\alpha}) \geq 0.
\end{equation} 
Generally, if both $T$ and $A$ vanish, the following is proven by R. Blumenthal and J. Hebda \cite{BH}.
\begin{thm}
    Let $M$ be a complete Riemannian manifold with a foliation $\{L_p\}$ satisfying both $T=0$ and $A=0$. Then the universal covering $\tilde{M}$ of $M$ topologically splits into $\tilde{L}\times N,$ where $\tilde{L}$ is the universal cover of all leaves $L_p$ and $H$ is a manifold tangent to the transverse bundle of $\{L_p\}$.
\end{thm}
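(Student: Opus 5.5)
The strategy is to show that each of the two complementary distributions acts as an Ehresmann connection for the other, and then to build the splitting map on the universal cover out of the resulting "rectangles". First I would record what the hypotheses give. By Theorem 2.7, $A=0$ means the normal bundle $Q$ is involutive, so $Q$ integrates to a second foliation $\{N_q\}$. Vanishing of $A$ also kills the mixed component $\pi^{\perp}(D_{\pi Y}\pi^{\perp}Z)$, so these integral manifolds are totally geodesic. The condition $T=0$ says that the leaves $L_p$ are totally geodesic as well. Thus $M$ carries two complementary, orthogonal, totally geodesic foliations. This situation persists when everything is lifted to the universal cover $\tilde M$ with the pulled-back complete metric, so I would work on $\tilde M$ from now on.

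The key analytic step is to prove that $Q$ is an Ehresmann connection for $\{L_p\}$, and symmetrically that $T\F$ is one for $\{N_q\}$. Concretely: given a vertical curve $\sigma$ and a horizontal curve $\tau$ with the same initial point, there should be a unique piecewise smooth map $\Phi(s,t)$ defined on the whole rectangle, with vertical $s$-curves and horizontal $t$-curves, such that $\Phi(\cdot,0)=\sigma$ and $\Phi(0,\cdot)=\tau$. Locally this follows from the local product structure given by the two integrable distributions. To globalize it, I would use the totally geodesic property. Because $\{N_q\}$ is totally geodesic, the horizontal "holonomy" transport between nearby leaves $L_p$ preserves the leaf metric; equivalently, the leaves of the complementary foliation are Riemannian for each other, by the same computation that shows $L_Ug_Q=0$. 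The lengths of the transported curves are therefore preserved. Completeness of $M$ then prevents the lifted curves from escaping in finite parameter, so a maximal rectangle cannot have a proper boundary and must be the whole rectangle. I expect this to be the main obstacle: one has to make precise the local isometry statement for the transport maps and run a continuity/open-closed argument on the parameter domain.

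With rectangles in hand, I would fix a base point $x_0\in\tilde M$ and let $\tilde L$ and $N$ be the leaves of the two foliations of $\tilde M$ through $x_0$. I would then define $\Psi:\tilde L\times N\to\tilde M$ by transporting a point of $\tilde L$ along the horizontal lift of a path in $N$ from $x_0$ to $y$. Independence of the chosen path follows from homotopy lifting through rectangles together with the simple connectivity of $\tilde M$. The same rectangle-lifting property shows that $\Psi$ is a local diffeomorphism with the path-lifting property, hence a covering map. Since $\tilde M$ is simply connected and $\tilde L\times N$ is connected, $\Psi$ is a diffeomorphism. Running the argument with roles swapped shows that the leaves of the lifted foliation are all diffeomorphic to $\tilde L$, which is simply connected as a factor of a simply connected product. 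This yields the topological splitting $\tilde M\cong\tilde L\times N$ with $N$ tangent to the transverse bundle.
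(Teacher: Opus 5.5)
The paper gives no proof of this statement; it quotes Blumenthal--Hebda, and your rectangle (Ehresmann connection) strategy is essentially theirs. The genuine gap is in the global step, where you define $\Psi$ on $\tilde L\times N$ with $\tilde L,N$ the leaves through $x_0$ \emph{in} $\tilde M$. Neither variable yields a well-defined value.

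First, the ``horizontal lift of a path $c\subset N$ starting at $x\in\tilde L$'' is the side $\Phi(1,\cdot)$ of a rectangle with base $(\alpha,c)$. It depends on the vertical path $\alpha$ from $x_0$ to $x$, not only on $x$, because the holonomy of $\{L_p\}$ around a loop in a leaf can move the horizontal plaque. For example, in the flat quotient $(\mathbb{R}\times\mathbb{R}^2)/\mathbb{Z}$, whose generator is a unit translation composed with a rotation, transporting a horizontal segment once around the closed leaf rotates it. Only the leafwise homotopy class of $\alpha$ matters, but at that stage you do not know that $\tilde L$ is simply connected.

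Second, simple connectivity of $\tilde M$ does not give independence of the path $c$ in $N$. It gives a null-homotopy of $c_1c_2^{-1}$ through arbitrary paths of $\tilde M$, whereas rectangles only lift homotopies that stay inside a leaf of $Q$. Converting the former into the latter amounts to knowing that $N$ is simply connected. You only obtain that at the very end, ``as a factor of a simply connected product'', so the argument is circular as written.

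The standard repair is to put the universal covers $\hat L,\hat N$ of the two leaves in the domain, setting $\Psi([\alpha],[\beta]):=\Phi(1,1)$ for the rectangle with base $(\alpha,\beta)$. This is well defined, because now only leafwise homotopies have to be lifted. In your setting $\Psi$ is a local isometry from $\hat L\times\hat N$ with the product of the leaf metrics. These metrics are complete, since totally geodesic leaves of a complete manifold are complete. Hence $\Psi$ is a covering map, hence a diffeomorphism onto the simply connected $\tilde M$, and only then do you learn that the lifted leaves are simply connected.

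A smaller point: in the analytic step the roles of the hypotheses are swapped. Along a rectangle, $\partial_t|\partial_s\Phi|^2=-2g(\partial_t\Phi,T_{\partial_s\Phi}\partial_s\Phi)$, so it is $T=0$, not total geodesy of $\{N_q\}$, that makes horizontal transport preserve the leaf metric of the $L_p$. That is the direction your $\Psi$ uses, and the only one available in the Blumenthal--Hebda topological version, where $Q$ is merely involutive. Total geodesy of $\{N_q\}$ controls the opposite transport. It comes from the component $\pi^{\perp}(D_{\pi Y}\pi Z)$ of O'Neill's $A$, not from the component you cite; the paper's display of $A$ has the projections interchanged. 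Since both conditions hold here, nothing breaks. In fact they make both distributions parallel, so de Rham's decomposition theorem gives directly that $\tilde M$ is isometric to $\tilde L\times N$.
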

Since it is well-known that the transverse bundle of a Riemannian foliation is a totally geodesic subbundle \cite{Ton}, a simply connected, complete manifold $(M,\F)$ with a Riemannian foliation satisfying both $T=0$ and $A=0$ splits into a Riemannian product $\tilde{L}\times N$.

\begin{rmk}\normalfont
Hereafter, we focus on a compact Einstein-type manifold with Riemannian potential flow $\mathcal{F}_X$ and it is denoted by $(M,\mathcal{F}_X,g)$. Also note that if $\F_X$ satisfies $A=0,$ then it is regarded as an analogy of the gradient potential flow $\F_{\nD f}$ on a complete Einstein-type manifold, by Remark 2.3. 
\end{rmk}

\section{Curvature equations on Einstein-type manifolds}

From now on, we always assume that $\kappa$ is the basic mean curvature if there are no other explanations. Also, we examine the relation between Ricci curvature and transverse Ricci curvature on $(M,\F_X,g)$ for later use. First, let us recall the Riccati equation on an arbitrary Riemannian manifold with a Riemannian foliation $(M,\F,g)$ as follows(cf. \cite{Bes}).
\begin{equation}
    Ric(Y,Z)-Ric^Q(Y,Z)+g(T_UY,T_UZ)+2g(A_YU,A_{Z}U)-\dfrac{1}{2}L_{\kappa^{\sharp}}g(Y,Z)=0,
\end{equation} where $Y,Z$ are basic vector fields perpendicular to $X,$ and $U$ is the unit tangent vector field of $X$.

Here, if we assume that $\kappa$ is a mean curvature of $\F_X$, then the above formula is deduced as follows.
\begin{equation}
    Ric(Y,Z)-Ric^Q(Y,Z)+g(T_UY,T_UZ)+2g(A_YU,A_{Z}U)-\nD_{tr}\kappa(Y,Z)=0,
\end{equation} 

Since $\F_X$ is $1$-dimensional, 
\begin{equation}
    g(T_UY,T_UZ)=\kappa(Y)\kappa(Z)
\end{equation} holds by definition of $T$-tensor. Hence, (3.2) could be replaced with the following.

\begin{equation}
    Ric(Y,Z)-Ric^Q(Y,Z)+2g(A_YU,A_ZU)-T_{\kappa}(Y,Z)=0,
\end{equation} 

As we are assuming that the given manifold $(M,\F_X,g)$ has a Riemannian potential flow $\F_X$, we have 
\begin{equation}
    \alpha Ric(Y,Z)=-\dfrac{\beta}{2}L_Xg(Y,Z)+\lambda \text{ }g(Y,Z)=\lambda\text{ }g(Y,Z).
\end{equation} Thus, we have the following by combining (3.4), (3.5) and the definition of Riemannian foliation.
\begin{equation}
    \lambda g_Q=\alpha(Ric^Q+T_{\kappa}-2\mathcal{A}),
\end{equation} where $\mathcal{A}$ is a symmetric $2$-tensor, defined from the extension of $g(A_YU,A_ZU)$, as follows.
\begin{equation}
\begin{split}
   &\mathcal{A}(Y,Z)=g(A_YU,A_ZU)\\
   &\mathcal{A}(X,\cdot)=0, 
\end{split}
\end{equation} where $Y,Z$ are vector fields perpendicular to $\F_X$. Since $U$ is the unit vector field parallel to $X$, it is clear that $A_XU=0$ and therefore $\mathcal{A}(X,\cdot)=0$ is canonically obtained. 

Therefore, we have the following formula by taking transverse trace on (3.6).
\begin{equation}
    (n-1)\lambda=\alpha(S^Q+\text{div}_B\kappa-2|A|^2).
\end{equation}
From the above equation, the integrand of the directional derivative of (3.8) is deduced as follows.
\begin{equation}
    (n-1)\int_MX(\lambda)=\alpha\int_M X(S^Q+\text{div}_B\kappa-2|A|^2).
\end{equation}

Since $S^Q$ is basic by its definition and also $\text{div}_B\kappa$ is basic by the assumption on $\kappa$ and (2.17), the following is obtained. 
\begin{equation}
    (n-1)\int_Mg(X,\nD\lambda)=-2\alpha\int_M g(X,\nD |A|^2).
\end{equation}

Moreover, by taking the divergence of $M$ on (3.8), we have the following. 
\begin{equation}
    d\lambda-\lambda\kappa=\alpha \text{div}_B(Ric^Q+T_{\kappa})-2\alpha \text{div}\mathcal{A},
\end{equation} as $Ric^Q$ and $T_{\kappa}$ are basic.

Then the following lemma arises, which is similar to the calculation in \cite{Moo}.
\begin{lem} Let $(M,\F,g)$ be a compact Riemannian manifold with a Riemannian foliation with basic mean curvature. Then the following equation is computed.
\begin{equation}
    \text{div}_B(Ric^Q+T_{\kappa})=\dfrac{1}{2}dS^Q+d(\text{div}_B\kappa)-(\text{div}_B\kappa)\kappa.
\end{equation}
\end{lem}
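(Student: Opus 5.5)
The identity is linear in its left-hand side, so I will compute $\text{div}_B Ric^Q$ and $\text{div}_B T_{\kappa}$ separately and add the results.

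\emph{The $T_\kappa$ term.} This is exactly Lemma 2.7, which holds under the present hypotheses (compact $M$, Riemannian foliation, basic $\kappa$) and gives
\[
\text{div}_B T_{\kappa}=i_{\kappa^{\sharp}}Ric^Q+d(\text{div}_B\kappa)-(\text{div}_B\kappa)\kappa .
\]

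\emph{The $Ric^Q$ term.} Here I need a transverse contracted second Bianchi identity adapted to the basic divergence. The transverse Levi-Civita connection $\nabla$ is torsion-free and metric on $Q$, and $R^Q$ is basic. Hence the usual argument applies on local transversals, which are Riemannian manifolds of dimension $q$ whose curvature is $R^Q$. It gives
\[
\text{div}_Q Ric^Q=\tfrac12 dS^Q
\]
for the transverse divergence, since all quantities involved are basic and are computed in a local quotient. By (2.17),
\[
\text{div}_B Ric^Q=\text{div}_Q Ric^Q-i_{\kappa^{\sharp}}Ric^Q=\tfrac12 dS^Q-i_{\kappa^{\sharp}}Ric^Q .
\]

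\emph{Combining.} Adding the two formulas, the terms $\pm i_{\kappa^{\sharp}}Ric^Q$ cancel, and we obtain
\[
\text{div}_B(Ric^Q+T_{\kappa})=\tfrac12 dS^Q+d(\text{div}_B\kappa)-(\text{div}_B\kappa)\kappa ,
\]
which is the claimed identity.

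\emph{Main obstacle.} The only delicate point is the sign and normalization conventions. The claim $\text{div}_Q Ric^Q=\tfrac12 dS^Q$ must be checked against the paper's curvature sign (2.11) and the definition (2.16) of $\text{div}_Q$ as $\sum i_{e_i}\nabla_{e_i}$. I also need to confirm that $\text{div}_B$ in Lemma 2.7 is the same operator. I would verify the transverse Bianchi identity in a local basic orthonormal frame, using that $\nabla_U$ acting on basic tensors reduces to the Lie derivative $L_U$, which vanishes. This ensures no leafwise terms or $\kappa$-corrections enter $\text{div}_Q$ other than the explicit $-i_{\kappa^\sharp}$ in $\text{div}_B$.
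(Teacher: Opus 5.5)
Your proposal is correct and is essentially the paper's own proof. The paper likewise derives $\text{div}_B Ric^Q=\frac{1}{2}dS^Q-i_{\kappa^{\sharp}}Ric^Q$ from the transverse contracted Bianchi identity together with $\text{div}_B=\text{div}_Q-i_{\kappa^{\sharp}}$, and then adds Lemma 2.7 so that the $i_{\kappa^{\sharp}}Ric^Q$ terms cancel; your check on local quotients only makes explicit what the paper cites as ``the transverse Bianchi identity.''
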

\begin{proof}
By the transverse Bianchi identity and the divergence formula (2.17), we have 
\begin{equation}
    \text{div}_BRic^Q=\dfrac{1}{2}dS^Q-i_{\kappa^{\sharp}}Ric^Q.
\end{equation}
Applying the above formula in (2.19), we directly have 
\begin{equation}
    \text{div}_B(Ric^Q+T_{\kappa})=\dfrac{1}{2}dS^Q+d(\text{div}_B\kappa)-(\text{div}_B\kappa)\kappa,
\end{equation} as desired.
\end{proof}

Hence, the following is induced, by combining (3.11) and Lemma 3.1.
\begin{equation}
 d\lambda-\lambda\kappa=\dfrac{\alpha}{2}dS^Q+\alpha d(\text{div}_B\kappa)-\alpha(\text{div}_B\kappa)\kappa-2\alpha \text{div}\mathcal{A}.
\end{equation}

Therefore, the integrand of the directional derivative of (3.15) is given as follows.
\begin{equation}
   \int_M X(\lambda)=-2\alpha \int_Mg(\text{div}\mathcal{A},X^{\flat})=\alpha\int_Mg(\mathcal{A},L_Xg),
\end{equation} where $X^{\flat}$ is the $1$-form corresponding to $X$.

Since $i_X\mathcal{A}$ vanishes, we have  
\begin{equation}
   \int_M X(\lambda)=\alpha\int_Mg(\mathcal{A},L_Xg)=\alpha\int_Mg(\mathcal{A},L_Xg_Q)=0
\end{equation} and hence 
\begin{equation}
    \int_M g(X,\nD |A|^2)=0
\end{equation} is also computed from (3.10). 

Additionally, we may calculate the curvature information on leaves of the Riemannian potential flow $\F_X$ as the leaves are $1$-dimensional. To be specific, the following lemma is implied.

\begin{lem} On a compact manifold $(M,g)$ with $1$-dimensional Riemannian flow with unit tangent vector $U$, the following formula is obtained. 
   \begin{equation}
    Ric(U,U)=\text{tr}_Q(\mathcal{A})+\text{div}_B\kappa=|A|^2+\text{div}_B\kappa.
    \end{equation}
\end{lem}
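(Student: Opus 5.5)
We compute $Ric(U,U)$ from the standard formula for the Ricci curvature along a unit vector field $U$ that generates a Riemannian flow. Write $\{e_i\}_{i=1}^{n-1}$ for a local orthonormal frame of $Q$. Then
\begin{equation*}
Ric(U,U)=\sum_{i=1}^{n-1}R(e_i,U,e_i,U).
\end{equation*}
To evaluate each term we use the O'Neill-type decomposition of the mixed sectional curvature (cf. \cite{Bes}). For a Riemannian foliation, $\pi(D_{e_i}U)$ splits into a symmetric part and a skew part. The symmetric part is $T$-related; since $L_Ug_Q=0$, only the mean-curvature contribution coming from $T_U$ survives there. The skew part is $A_{e_i}U$.

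The mixed formula reads
\begin{equation*}
R(e_i,U,e_i,U)=g((D_U T)_{e_i}e_i,U)-|T_Ue_i|^2+|A_{e_i}U|^2,
\end{equation*}
possibly with a $\pm$ sign on the last two terms according to the curvature convention (2.2). We then sum this over $i$.

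First, we apply the definition of $\mathcal{A}$ in (3.7) to the $A$-part. This gives
\begin{equation*}
\sum_i|A_{e_i}U|^2=\text{tr}_Q\mathcal{A}=|A|^2 .
\end{equation*}
We identify the last expression with $|A|^2$, using the fact that $A$ restricted to $Q\times Q$ is determined by the values $g(A_{e_i}U,e_j)$ via skew-symmetry.

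Second, we treat the $T$-part together with the divergence term. Since $\F_X$ is $1$-dimensional, $T_UU=\kappa^{\sharp}$, and $T_Ue_i=-\kappa(e_i)U$ as in (3.3). Summing the derivative term over $i$ should give $\text{div}_Q\kappa$. The quadratic term should give $-|\kappa|^2$. Concretely, we compute
\begin{equation*}
\sum_i g(D_{e_i}(D_UU),e_i)=\sum_i g(D_{e_i}\kappa^{\sharp},e_i),
\end{equation*}
together with the terms involving $g(D_{e_i}U,D_Ue_i)$. Combined with $\text{div}_B=\text{div}_Q-i_{\kappa^{\sharp}}$ from (2.17), this yields $\text{div}_Q\kappa-|\kappa|^2=\text{div}_B\kappa$.

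An alternative route, which we would use to check signs, is the identity
\begin{equation*}
Ric(U,U)=\text{div}(D_UU)-\text{div}(U\,\text{div}U)+(\text{div}U)^2-\text{tr}(DU\circ DU).
\end{equation*}
Here $\text{div}U=0$, because $\kappa$ is basic and $g(D_UU,U)=0$ gives $\text{div}_{\F}U=0$; the transverse part of $\text{div}U$ vanishes because $L_Ug_Q=0$. Next, $D_UU=\kappa^{\sharp}$. Finally, $\text{tr}(DU\circ DU)=-|A|^2$ plus $\kappa$-terms, since on $Q$ the map $DU$ is skew and given by $A$. The $\kappa$-terms combine with $\text{div}\,\kappa^{\sharp}=\text{div}_B\kappa$, where the divergence on $M$ of a basic form agrees with $\text{div}_B$ \cite{AL}.

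The main obstacle is sign and convention bookkeeping. This includes the unusual curvature sign in (2.2), the normalization of $|A|^2$ (whether it equals $\sum_i|A_{e_i}U|^2$ or $\sum_{i,j}|A_{e_i}e_j|^2$), and the precise cancellation of the $|\kappa|^2$ terms. We would check the final identity on a Hopf fibration, where $\kappa=0$ and $Ric(U,U)=|A|^2$, and on a mapping torus example, where $A=0$.
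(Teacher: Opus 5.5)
Your first route is the paper's: sum the O'Neill mixed-curvature formula (3.20) over a transverse frame. However, the formula you quote puts the derivative term in the wrong slot.

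The correct term is $g((D_{e_i}T)_UU,e_i)$, the horizontal derivative of $T_UU=\kappa^{\sharp}$. Since $D_{e_i}U\in Q$, the correction terms $T_{D_{e_i}U}U$ and $g(T_U(D_{e_i}U),e_i)$ vanish. So this term equals $g(D_{e_i}\kappa^{\sharp},e_i)$, and it sums to $\text{div}_Q\kappa$.

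The term you wrote, $g((D_UT)_{e_i}e_i,U)$, contains no derivative of $\kappa$. Since $T_{e_i}=0$, it equals $-g(T_{\pi^{\perp}D_Ue_i}e_i,U)=-\kappa(e_i)^2$. Taken literally, your formula therefore sums to $|A|^2-2|\kappa|^2$, and your claim that ``the derivative term gives $\text{div}_Q\kappa$'' is false for it.

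You also dropped $g((D_UA)_{e_i}e_i,U)$, which the paper removes by skew-symmetry of $A$ on $Q$. Your ``concretely'' paragraph does compute $\sum_i g(D_{e_i}\kappa^{\sharp},e_i)$, which is the correct term. So the repair is just to quote (3.20) correctly, and then you recover the paper's proof.

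Your second route, the pointwise identity $Ric(U,U)=\text{div}(D_UU)-U(\text{div}U)-\text{tr}(DU\circ DU)$, is genuinely different and is already a complete proof. It avoids O'Neill tensors and their covariant derivatives, uses only $|U|=1$ and $L_Ug_Q=0$, and involves only $Ric$, so convention (2.2) cannot cause sign trouble. The bookkeeping is simpler than you suggest:

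\begin{itemize}
\item $\text{div}U=0$ holds without any basicness of $\kappa$.
\item $DU$ maps $TM$ into $Q$, and its restriction $B$ to $Q$ is skew. The column $DU(U)=\kappa^{\sharp}$ contributes only $g(D_{\kappa^{\sharp}}U,U)=0$ to the trace of the square. Hence $\text{tr}(DU\circ DU)=\text{tr}(B^2)=-\sum_i|D_{e_i}U|^2=-|A|^2$ exactly, with no $\kappa$-terms.
\item All $\kappa$-dependence comes from $\text{div}\,\kappa^{\sharp}=\text{div}_Q\kappa-|\kappa|^2=\text{div}_B\kappa$, which holds for any horizontal field.
\end{itemize}

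Neither route uses compactness. The paper's route stays inside the O'Neill--Riccati framework it uses elsewhere (e.g.\ (3.1) and Lemma 5.2). Yours is shorter and sign-safe.

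In both routes, the identification $D_{e_i}U=A_{e_i}U$ relies on O'Neill's convention that $A_Y$ maps $T\F$ to $Q$. The paper's proof implicitly uses this convention too, even though the projections in (2.14) as literally written would give $A_{e_i}U=0$.
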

\begin{proof}
    Recalling Gray-O'Neill formula (e.g. \cite{Bes,Ton}), we apply the following formula.
    \begin{equation}
       R(Y,U,Z,U)= g((D_YT)_UU,Z)-g(T_UY,T_UZ)+g((D_UA)_YZ,U)+\mathcal{A}(Y,Z).
    \end{equation}
Here, by definition of $T$ tensor, we directly calculate the following holds. 
\begin{equation}
\begin{split}
    g((D_YT)_UU,Z)&=g(D_Y\kappa^{\sharp},Z)-g(T_{D_YU}U,Z)-g(T_U(A_YU),Z)\\&
    =g(D_Y\kappa^{\sharp},Z).
\end{split}
\end{equation}
    That is, we straightforwardly have
    \begin{equation}
        \sum_{i=1}^{n-1}R(e_i,U,e_i,U)= \text{tr}_QT_{\kappa}+g((D_UA)_{e_i}e_i,U)+|A|^2,
    \end{equation} by definition of $T_{\kappa}$ and $\mathcal{A}$.
    As 
    \begin{equation}
        g((D_UA)_YZ,U)=-g((D_UA)_ZY,U)
    \end{equation} holds by definition of $A$, we obtain 
    \begin{equation} 
        Ric(U,U)=|A|^2+\text{div}_B\kappa,
    \end{equation} as desired.
\end{proof}

\begin{cor}Any compact manifold $(M,g)$ with $1$-dimensional Riemannian flow satisfies the following.
\begin{equation}
    S^Q-S=|A|^2-2\text{div}_B\kappa.
\end{equation}
\end{cor}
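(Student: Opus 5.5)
The plan is to compute the scalar curvature $S$ of $(M,g)$ by splitting its trace into the part along the normal bundle $Q$ and the part along the flow. The normal part comes from the Riccati equation (3.4); the tangential part is Lemma 3.2. Let $\{e_i\}_{i=1}^{n-1}$ be a local orthonormal frame of $Q$ and $U$ the unit tangent field of the flow. Then
\begin{equation*}
S=\sum_{i=1}^{n-1}Ric(e_i,e_i)+Ric(U,U).
\end{equation*}

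\emph{Normal part.} Equation (3.4) holds for any Riemannian flow with basic mean curvature; it does not use the Einstein-type equation. It gives
\begin{equation*}
Ric(Y,Z)=Ric^Q(Y,Z)-2\mathcal{A}(Y,Z)+T_{\kappa}(Y,Z)
\end{equation*}
for $Y,Z$ perpendicular to the flow. Taking the transverse trace gives $S^Q-2\,\mathrm{tr}_Q\mathcal{A}+\mathrm{tr}_QT_{\kappa}$. Here I use $\mathrm{tr}_Q\mathcal{A}=|A|^2$, the same normalisation as in Lemma 3.2 and (3.8). For the last term, the trace of $\nD_{tr}\kappa$ is $\text{div}_Q\kappa$ and the trace of $\kappa\otimes\kappa$ is $|\kappa|^2=i_{\kappa^\sharp}\kappa$. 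Hence, by (2.17),
\begin{equation*}
\mathrm{tr}_QT_\kappa=\text{div}_Q\kappa-i_{\kappa^\sharp}\kappa=\text{div}_B\kappa.
\end{equation*}
This is the same identity already used to pass from (3.6) to (3.8). The normal part is therefore $S^Q-2|A|^2+\text{div}_B\kappa$.

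\emph{Tangential part.} Lemma 3.2 gives $Ric(U,U)=|A|^2+\text{div}_B\kappa$.

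\emph{Conclusion.} Adding the two parts gives $S=S^Q-|A|^2+2\,\text{div}_B\kappa$, which rearranges to $S^Q-S=|A|^2-2\,\text{div}_B\kappa$.

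The only delicate point is bookkeeping: the trace of $\mathcal{A}$ must be read as $|A|^2$ with the same convention in (3.4) and in Lemma 3.2, and $\mathrm{tr}_QT_\kappa$ must be identified with the basic divergence $\text{div}_B\kappa$ rather than the transverse divergence $\text{div}_Q\kappa$. The rest is direct substitution.
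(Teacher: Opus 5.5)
Your proposal is correct and follows the paper's own proof. The paper likewise takes the transverse trace of (3.4), reading $\mathrm{tr}_Q\mathcal{A}=|A|^2$ and $\mathrm{tr}_QT_\kappa=\text{div}_B\kappa$ as in (3.8), and then adds $Ric(U,U)=|A|^2+\text{div}_B\kappa$ from (3.24).
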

\begin{proof}
    The above formula is derived straightforwardly, by putting (3.24) in the transverse trace of (3.4).
\end{proof}

On the other hand, along the leaves of $\F_X,$ we have the following formula on a general $(M,\F_X,g)$.
\begin{equation}
    \lambda=\alpha Ric(U,U)+\dfrac{\beta}{2}L_Xg(U,U)+{\gamma}|X|^2.
\end{equation}
As $\F_X$ is Riemannian, the following is deduced from (3.26), since we apply (3.24) and the definition of Riemannian foliation.
\begin{equation}
    {\lambda}=\alpha |A|^2+\alpha\text{div}_B\kappa+{\beta}\text{div}X+{\gamma}|X|^2.
\end{equation}

Therefore, the following integrand formula is derived, by (3.17), (3.18), and the assumption that $\kappa$ is basic.
\begin{equation}
    {\beta}\int_MX(\text{div}X)+{\gamma}\int_MX(|X|^2)=0.
\end{equation}
In other words, \begin{equation}
   {\gamma}\int_MX(|X|^2)= {\beta}\int_M|\text{div}X|^2.
\end{equation}

We also obtain the following by taking divergence on (2.10). 
\begin{equation}
    \dfrac{\alpha}{2}dS+\dfrac{\beta}{2}\text{div}(L_Xg)+\gamma(D_XX+(\text{div}X)X)=d\lambda.
\end{equation}

That is, 
\begin{equation}
    \dfrac{\alpha}{2}d(S^Q-|A|^2+2\text{div}_B\kappa)+\dfrac{\beta}{2}\text{div}(L_Xg)+\gamma(D_XX+(\text{div}X)X)=d\lambda
\end{equation} is obtained by (3.25).

Therefore, we obtain 
\begin{equation}
    0=\int_Mg(\nD\lambda,X)=\dfrac{\beta}{2}\int_M g(\text{div}(L_Xg),X^{\flat})+\gamma\int_Mg(D_XX,X)+\gamma\int_M\text{div}X|X|^2.
\end{equation} from the above, as $S^Q$ and $\kappa$ are basic and (3.17), (3.18) are applied.

As $-\text{div}$ is the formal adjoint of the Riemannian connection, we reach the following formula. 
\begin{equation}
   \dfrac{\beta}{4} \int_M|L_Xg|^2=-\dfrac{\gamma}{2}\int_MX(|X|^2).
\end{equation}

\section{Properties on Einstein-type manifolds}

In this section, we prove the main results of this paper. First, the following theorem is identified with Theorem 1.1.
\begin{thm}
Let $(M,g)$ be a compact Einstein-type manifold of dimension $\geq 3$ with the Riemannian potential flow $\F_X$. If $\beta\neq 0$ and the mean curvature $\kappa$ of $F_X$ is basic, then $\F_X$ is taut.
\end{thm}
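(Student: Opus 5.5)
The plan is to show that $X$ is a Killing field and then apply Carri\`ere's criterion, Lemma 2.8: a Riemannian flow on a compact manifold is taut if and only if it is generated by a nonvanishing Killing field. Since $X$ is assumed nonvanishing and generates $\F_X$, it suffices to prove $L_Xg=0$.

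The two integral identities from Section 3 do this. Equation (3.29) states
\begin{equation*}
\gamma\int_M X(|X|^2)=\beta\int_M|\text{div}X|^2,
\end{equation*}
and equation (3.33) states
\begin{equation*}
\dfrac{\beta}{4}\int_M|L_Xg|^2=-\dfrac{\gamma}{2}\int_M X(|X|^2).
\end{equation*}
Substituting the first into the second eliminates $\gamma$:
\begin{equation*}
\dfrac{\beta}{4}\int_M|L_Xg|^2=-\dfrac{\beta}{2}\int_M|\text{div}X|^2 .
\end{equation*}
Because $\beta\neq0$, we may divide by $\beta$. The left side is then nonnegative and the right side nonpositive, so both vanish. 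In particular $\int_M|L_Xg|^2=0$, hence $L_Xg=0$ on $M$ and $X$ is Killing. (As a consistency check, $\text{div}X=\tfrac12\text{tr}\,L_Xg=0$ follows as well.) Lemma 2.8 then gives tautness of $\F_X$.

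The main obstacle is justifying (3.29) and (3.33) themselves, since everything above is bookkeeping. Both rely on (3.17)--(3.18), namely $\int_M X(\lambda)=0$ and $\int_M X(|A|^2)=0$, which come from the basicness of $S^Q$, $\kappa$ and $\text{div}_B\kappa$. In particular one needs $\int_M X(h)=0$ for every basic function $h$. This holds because $X(h)=g(X,\nD h)$ and $h$ is constant along the leaves of $\F_X$, so the integrand vanishes pointwise.

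For (3.33) one also integrates by parts:
\begin{equation*}
\int_M g(\text{div}(L_Xg),X^\flat)=-\dfrac12\int_M|L_Xg|^2,
\end{equation*}
together with
\begin{equation*}
\int_M g(D_XX,X)+\int_M(\text{div}X)|X|^2=\dfrac12\int_M X(|X|^2)-\int_M X(|X|^2)=-\dfrac12\int_M X(|X|^2).
\end{equation*}
I would recheck these sign conventions carefully, since the argument depends on the two sides of the combined identity having opposite signs. If a sign turned out differently, the fallback would be to use (3.27) and the Riemannian condition $L_Xg|_Q=0$. This leaves $L_Xg$ with only its $U$-components: $L_Xg(U,U)=2\,\text{div}X$ and the mixed terms $L_Xg(U,Y)$. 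One would then show directly that the mixed terms vanish and that $\int(\text{div}X)^2=0$.
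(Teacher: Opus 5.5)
Your argument is the paper's proof: combining (3.29) with (3.33) gives $\frac12\int_M|L_Xg|^2=-\int_M(\mathrm{div}X)^2$, so $L_Xg=0$, and Carri\`ere's criterion (Lemma 2.8) yields tautness. One caveat on your side remark: basicness of $S^Q$ and $\mathrm{div}_B\kappa$ alone only gives the single relation (3.10) between $\int_M X(\lambda)$ and $\int_M X(|A|^2)$, so the separate vanishing in (3.17)--(3.18) needs an extra input, for instance that $|A|^2$ (and hence $\lambda$, by (3.8)) is itself basic; this holds because Rummler's formula $dU^\flat+\kappa\wedge U^\flat=\varphi_0$ with $i_U\varphi_0=0$ and $d\kappa=0$ gives $L_U\varphi_0=0$, while $|A|^2$ is a constant multiple of $|\varphi_0|^2_{g_Q}$ and $L_Ug_Q=0$.
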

\begin{proof}
    By (3.29) and (3.33), the following is derived.
    \begin{equation}
        \dfrac{1}{2} \int_M|L_Xg|^2=-\int_M|\text{div}X|^2
    \end{equation} Thus, $L_Xg$ is forced to vanish by positive definiteness. Therefore, $\F_X$ should be taut(cf. Lemma 2.8). 
\end{proof}

Now let us assume that a compact Einstein-type manifold with Riemannian potential flow $(M,\F_X,g)$ satisfies $A=0$ and has basic $\kappa$. i.e. the transverse bundle $Q$ is involutive(cf. Lemma 2.9). Although the given foliation $\F_X$ is taut, it is not determined whether $\kappa$ vanishes. For this reason, we consider the following equation. 
\begin{equation}
    0=2L_Xg(U,Y)=g(D_UX,Y)+g(D_YX,U),
\end{equation} where $Y$ is an arbitrary vector field perpendicular to the nonvanishing potential vector field $X$. Then 
\begin{equation}
g(D_UX,Y)+g(D_YX,U)=g(U(|X|)U,Y)+|X|g(\kappa^{\sharp},Y)+g(A_YX,U)=|X|g(\kappa^{\sharp},Y)\end{equation} is clearly obtained. Therefore, we have $\kappa=0$ in this situation, as $|X|$ is nonvanishing. That is, both $T$ and $A$ vanish on such compact Einstein-type manifolds since $T$ and $\kappa$ are identified on a Riemannian flow. 

Hence, we are ready to prove Corollary 1.2.
\begin{cor}
    Let $(M,g)$ be a compact Einstein-type manifold of dimension $\geq 3$ with the Riemannian potential flow $\F_X$. If we assume $\alpha\beta \neq0,$ $A=0$ and $\kappa$ is basic, then the Riemannian universal covering $\tilde{M}$ of $M$ splits into $\mathbb{R} \times N,$ for some complete Einstein manifold $N$.
\end{cor}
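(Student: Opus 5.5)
The plan is to combine Theorem 4.1 with the discussion right after it, and then apply the Blumenthal--Hebda splitting (Theorem 2.11) together with the transverse Einstein condition coming from (3.6).

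\textbf{Step 1: the leaves are geodesics and the flow is Killing.} Since $\beta\neq0$ and $\kappa$ is basic, the proof of Theorem 4.1 shows via (4.1) that $L_Xg=0$, so $X$ is Killing. The computation (4.2)--(4.3), which uses $A=0$, then gives $|X|\kappa=0$. Because $X$ is nonvanishing, $\kappa=0$. For a $1$-dimensional foliation $T$ is determined by $\kappa$, so $T=0$. Together with the hypothesis $A=0$, both O'Neill tensors vanish.

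\textbf{Step 2: the Riemannian splitting.} Pull $\F_X$ back to the universal cover $\tilde M$; it is complete because $M$ is compact. The lifted foliation and its normal bundle still satisfy $T=0$ and $A=0$. By Theorem 2.11 and the remark following it, $\tilde M$ splits isometrically as $\tilde L\times N$. Here $\tilde L$ is the universal cover of a leaf, and $N$ is an integral manifold of the involutive, totally geodesic normal bundle $Q$. The leaves are $1$-dimensional complete geodesics, so $\tilde L$ is $\mathbb{R}$ or a point, and since the leaves are nontrivial, $\tilde L=\mathbb{R}$. The factor $N$ is complete as a factor of a complete product.

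\textbf{Step 3: $N$ is Einstein.} Since $A=0$ and $\kappa=0$, we have $T_\kappa=\nabla_{tr}\kappa-\kappa\otimes\kappa=0$ and $\mathcal A=0$. Equation (3.6) therefore reduces to $\lambda g_Q=\alpha Ric^Q$. As $\alpha\neq0$, this gives $Ric^Q=(\lambda/\alpha)g_Q$. In the product $\mathbb{R}\times N$, $Ric^Q$ is exactly the Ricci tensor of $N$, by the transverse Gauss equation (2.27) with $A=0$. So $N$ is Einstein in the pointwise sense, with factor $\lambda/\alpha$.

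\textbf{Step 4: the Einstein factor is constant.} This is the main obstacle. From (3.8) with $\kappa=0$ and $A=0$, $(n-1)\lambda=\alpha S^Q$, so $\lambda$ is basic: it is constant along $\mathbb{R}$ and descends to a function on $N$. Two routes give constancy:
\begin{itemize}
\item If $\dim N=n-1\geq3$, Schur's lemma applied to $Ric_N=(S_N/(n-1))g_N$ shows $S_N$ is constant.
\item Alternatively, and this also covers $\dim N=2$ where Schur fails, use the contracted Bianchi identity (3.13) with $\kappa=0$: it reads $\mathrm{div}\,Ric^Q=\tfrac12 dS^Q$. On the other hand, $Ric^Q=\tfrac{S^Q}{n-1}g_Q$ gives $\mathrm{div}\,Ric^Q=\tfrac1{n-1}dS^Q$. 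Hence $\bigl(\tfrac12-\tfrac1{n-1}\bigr)dS^Q=0$. This forces $dS^Q=0$ when $n\neq3$, but is vacuous when $n=3$.
\end{itemize}
For the case $n=3$, I would instead use (3.15) with $\kappa=0$ and $\mathcal A=0$, which gives $d\lambda=\tfrac{\alpha}{2}dS^Q$. Combined with $2\lambda=\alpha S^Q$, this yields $d\lambda=d\lambda$ and no information, so the Einstein-type equation along the flow must be used. Equation (3.27) gives $\lambda=\beta\,\mathrm{div}X+\gamma|X|^2=\gamma|X|^2$. Feeding this into (3.30) along $Q$, using that $X$ is Killing with $D_XX=-\tfrac12 d|X|^2$, I expect to get $d\lambda$ proportional to $\gamma\, d|X|^2$ with an incompatible coefficient, forcing $\lambda$ to be constant. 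Alternatively, when $\dim N=2$ the statement ``Einstein'' is automatic pointwise, and one may accept that reading.

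With the factor constant, $N$ is a complete Einstein manifold and $\tilde M=\mathbb{R}\times N$, as claimed.
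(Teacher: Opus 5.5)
\textbf{Gap in Step 1.} Your Steps 1--3 are exactly the paper's argument: Theorem 4.1, then (4.2)--(4.3) to get $\kappa=0$, then Theorem 2.11 and (3.6). Step 4 is your own addition. The problem is that the step you borrow in Step 1 is not valid, because (4.2) does not give $|X|\kappa=0$.

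Write $X=|X|U$. One has $g(D_UX,Y)=|X|\,g(D_UU,Y)=|X|\kappa(Y)$, but also $g(D_YX,U)=Y(|X|)+|X|\,g(D_YU,U)=Y(|X|)$. This vertical component of $D_YX$ has nothing to do with integrability of $Q$, so $A=0$ does not remove it. Hence $L_Xg=0$ only yields $|X|\kappa+d|X|=0$ on $Q$, i.e.\ $\kappa=-d\log|X|$, which is the standard mean curvature of a Killing flow.

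A concrete example: on $S^1\times T^2$ take $g=V(y)^2dt^2+dx^2+dy^2$ with $V>0$ nonconstant and periodic, and $X=\partial_t$. Then $X$ is Killing, $A=0$ and $\kappa$ is basic, yet $\kappa=-d\log V\neq 0$.

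So $T=0$ is equivalent to $|X|$ being constant, and you have not shown this. Without it, Theorem 2.11 applies only to the conformal metric $|X|^{-2}g$. For $g$ itself you only get a warped product $|X|^2dt^2+g_N$ on $\mathbb{R}\times N$, not a Riemannian product. The paper's own text has the same omission, so citing (4.3) does not close the gap.

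\textbf{What is missing.} You need to prove that $|X|$ is constant, and this must use the Einstein-type equation, since the foliation hypotheses alone do not imply it. One natural route is as follows.
\begin{itemize}
\item Since $X$ is Killing, $D_XX=-\tfrac12\nabla|X|^2$. Taking the trace and the divergence of $\alpha Ric+\gamma X^\flat\otimes X^\flat=\lambda g$ then gives $(n-2)\,d\lambda=2\gamma\,d|X|^2$.
\item The $UU$-component, combined with $Ric(U,U)=\mathrm{div}\,\kappa+|A|^2=-\Delta\log|X|$ (from the proof of Lemma 5.2), gives $\alpha\,\Delta\log|X|=\tfrac{(n-4)\gamma}{n-2}|X|^2-c$ for some constant $c$.
\item Integration and the maximum principle then force $|X|$ to be constant when $\gamma=0$, when $n=4$, or when $(n-4)\gamma/\alpha>0$.
\end{itemize}
The remaining sign cases need a further argument.

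\textbf{Consequence for Step 4.} Once $|X|$ is constant you get $\kappa=0$, and (3.27) gives $\lambda=\gamma|X|^2$, constant in every dimension. So the case analysis in your Step 4 (Schur's lemma, the Bianchi computation, and the unfinished $n=3$ sketch) becomes unnecessary.
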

\begin{proof}
    By Theorem 2.11, $T=0$ and $A=0$ directly implies the following Riemannian product. 
    \begin{equation}
        \tilde{M}=\mathbb{R} \times N
    \end{equation} for some complete manifold $N$. Therefore, we only need to prove that $N$ is an Einstein manifold. 
    Since we have \begin{equation}
        \lambda g_Q=\alpha Ric^Q 
    \end{equation} by (3.6), the transverse metric is clearly transverse Einstein, as desired.
\end{proof}
\begin{rmk}\normalfont
If we replace $\alpha\beta\neq 0$ with $\alpha=0$ and $\beta \neq 0$ in Corollary 4.2, then (2.10) implies the following. 
\begin{equation}
    \gamma \text{ }X\otimes X=0.
\end{equation} As $X$ is assumed to be nonvanishing, $\gamma=0$ is forced. Therefore, we could summarize that $\tilde{M}$ splits into $\mathbb{R}\times N$ for some complete Riemannian manifold $N$ as $T=0$ and $A=0$, however $N$ is not determined to be Einstein as (4.6) has no information on transverse geometry.   
\end{rmk}

\section{Applications on almost Ricci solitons}
In this section, we prove Corollary 1.3 and Theorem 1.4. In fact, the following remark proves Corollary 1.3.
\begin{rmk}\normalfont(Proof of Corollary 1.3)
Note that an almost Ricci soliton is a Einstein-type manifold satisfying $\beta\neq0$, since the definition of such metric is given as follows.
\begin{equation}
    Ric+\dfrac{1}{2}L_Xg=\lambda \text{ }g,
\end{equation}for some smooth function $\lambda$. Thus, we could directly apply Theorem 1.1 and $L_Xg=0$ is obtained. Therefore, we have \begin{equation}
     Ric=\lambda g
\end{equation} with $\lambda$ is forced to be constant since we are considering $n\geq 3$-dimensional Riemannian manifolds. In particular, if $A=0$ is additionally assumed, then it is available to combine the above observation and Corollary 1.2. This proves Corollary 1.3.
\end{rmk}

On the other hand, we recall the justification of the result of Ranjan and the author as follows.
\begin{lem}\cite{Ran}
    A compact Riemannian manifold $M$ with strictly negative Ricci curvature cannot attain a Riemannian flow.
\end{lem}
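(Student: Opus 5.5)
The plan is to integrate the Ricci curvature in the direction of the flow and show that the integral is nonnegative, which contradicts strictly negative Ricci curvature. Suppose, for contradiction, that $M$ is compact with $Ric<0$ and carries a Riemannian flow $\F$. Since the flow need not be globally oriented, I first pass to a finite (at most double) covering on which $T\F$ is orientable; compactness and the curvature hypothesis are preserved. On that cover I take a global unit vector field $U$ tangent to $\F$ and aim to show
\begin{equation*}
\int_M Ric(U,U)\,\geq\,0 .
\end{equation*}

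My first instinct is to integrate Lemma 3.2, $Ric(U,U)=|A|^2+\text{div}_B\kappa$. However, that lemma uses a basic $\kappa$. Lemma 2.5 only makes $\kappa$ basic after changing the leafwise metric, and such a change alters $Ric$, so I cannot assume $\kappa$ basic for the given metric. To avoid this, I will instead use the metric-independent Bochner identity for an arbitrary vector field:
\begin{equation*}
Ric(U,U)=\text{div}(D_UU)-\text{div}\big((\text{div}U)U\big)+(\text{div}U)^2-\text{tr}(DU\circ DU).
\end{equation*}
Integrating over the compact $M$ kills the two divergence terms. This leaves
\begin{equation*}
\int_M Ric(U,U)=\int_M\big((\text{div}U)^2-\text{tr}(DU\circ DU)\big).
\end{equation*}

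The main step is computing $DU$ using the Riemannian condition $L_Ug_Q=0$ (Definition 2.4).

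\emph{Action on $Q$.} Restricted to $Q$, the $Q$-component of $DU$ has vanishing symmetric part, so it is skew-symmetric; this is exactly the $A$-tensor part $Y\mapsto A_YU$.

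\emph{Divergence.} Because that $Q$-block is skew, its trace vanishes, so $\text{div}U=\sum_i g(D_{e_i}U,e_i)=0$. The term $g(D_UU,U)$ also vanishes since $|U|=1$.

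\emph{Trace of the square.} $DU$ has no $U$-component, since $g(D_VU,U)=0$ for every $V$. It also sends $U$ to $D_UU=-\kappa^\sharp\in Q$ (up to the paper's sign convention). Consequently the $U$-column of $DU$ contributes nothing to $\text{tr}(DU\circ DU)$. The trace then reduces to the trace of the square of the skew $Q$-block, which equals $-\sum_i|A_{e_i}U|^2=-|A|^2$.

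Substituting these gives $\int_M Ric(U,U)=\int_M|A|^2\ge 0$. On the other hand, $Ric<0$ forces $\int_M Ric(U,U)<0$, which is a contradiction. This is the expected integrated form of Lemma 3.2: the $\text{div}_B\kappa$ term is replaced by a genuine divergence on $M$, so the basicness assumption is never needed.

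The main obstacle is the bookkeeping in this step. I need to confirm carefully that the mixed terms involving $\kappa$ contribute nothing to $\text{tr}(DU\circ DU)$, and that the skew block is exactly the $A$-tensor, up to a positive constant depending on the normalization of $|A|^2$; the sign of the final term is all that matters. The orientation cover is a minor technical point and does not affect the argument.
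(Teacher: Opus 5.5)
Your argument is correct and is essentially the paper's proof: both rest on the pointwise identity $Ric(U,U)=\text{div}(D_UU)+|A|^2$, i.e.\ $Ric(U,U)=\text{div}\,\kappa+|A|^2$ with the full divergence of $M$, so basicness of $\kappa$ is never needed, and both integrate it to get $\int_M Ric(U,U)=\int_M|A|^2\geq 0$. The only difference is that you obtain this identity from the Bochner formula for $U$, using that $DU$ has skew $Q$-block $Y\mapsto A_YU$, zero $U$-row, and $U$-column $D_UU\in Q$ (so indeed $\text{div}\,U=0$ and $\text{tr}(DU\circ DU)=-|A|^2$), whereas the paper reads it off from the O'Neill formula (3.20) together with $\text{div}_Q\kappa=\text{div}\,\kappa+|\kappa|^2$.
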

\begin{proof}
    For the sake of completeness of this paper, let us remind of the proof. Note that (3.22) is obtained as the following form, since $\kappa$ may not be divergence-free.
    \begin{equation}
       Ric(U,U)=\text{div}_Q\kappa-|\kappa|^2+|A|^2=\text{div}\kappa+|A|^2,
    \end{equation} as the following is calculated directly. 
    \begin{equation}
        \text{div}_Q\kappa=\text{div}\kappa-g(D_U\kappa^{\sharp},U)=\text{div}\kappa+|\kappa|^2,
    \end{equation} for the unit vector $U$ tangent to the Riemannian flow.
    Therefore, the integrand of the above equation is given as follows. 
    \begin{equation}
        \int_M Ric(U,U)= \int_M |A|^2 <0,
    \end{equation} which occurs a contradiction. Therefore, the assertion is justified.
\end{proof}
\begin{lem}\cite{Moo}
    Let $M$ be a compact Riemannian manifold endowed with a transverse Einstein foliation $\F$. If the transverse scalar curvature $S^Q$ of $\F$ is nonnegative, then $\F$ is taut.
\end{lem}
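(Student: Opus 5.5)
We want to show: if $\F$ is a transverse Einstein Riemannian foliation on a compact $M$, so $Ric^Q=\lambda^Q g_Q$ with $\lambda^Q$ constant, and $S^Q=q\lambda^Q\geq 0$, then $\F$ is taut. The plan is to combine Lemma 2.6, Lemma 2.7 and an integration argument. By Lemma 2.5 we may change the leafwise metric, keeping $g_Q$ fixed, so that $\kappa$ is basic and $\text{div}_B\kappa=0$. Changing the leafwise metric leaves $Ric^Q$ unchanged, so the hypothesis survives. By Lemma 2.6 it then suffices to prove $T_\kappa=0$. Since $\text{tr}_QT_\kappa=\text{div}_Q\kappa-|\kappa|^2=\text{div}_B\kappa=0$, it is enough to show that $\kappa=0$ itself.

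The key identity is Lemma 2.7 with $\text{div}_B\kappa=0$:
\begin{equation*}
\text{div}_B(T_\kappa)=i_{\kappa^\sharp}Ric^Q=\lambda^Q\kappa .
\end{equation*}
Pair this with $\kappa$ and integrate over $M$, using that $\text{div}_B$ agrees with the divergence of $(M,g)$ on basic tensors, so that $-\text{div}_B$ is the formal adjoint of $\nD_{tr}$ on basic forms. This gives
\begin{equation*}
\lambda^Q\int_M|\kappa|^2=\int_M g(\text{div}_BT_\kappa,\kappa)=-\int_M g(T_\kappa,\nD_{tr}\kappa).
\end{equation*}
Substituting $\nD_{tr}\kappa=T_\kappa+\kappa\otimes\kappa$ yields
\begin{equation*}
\lambda^Q\int_M|\kappa|^2=-\int_M|T_\kappa|^2-\int_M T_\kappa(\kappa^\sharp,\kappa^\sharp).
\end{equation*}
The remaining term is handled by $T_\kappa(\kappa^\sharp,\kappa^\sharp)=\frac12\kappa^\sharp(|\kappa|^2)-|\kappa|^4$. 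Integrating, and using $\text{div}_B\kappa=0$ to kill $\int\kappa^\sharp(|\kappa|^2)$, gives $\int_M T_\kappa(\kappa^\sharp,\kappa^\sharp)=-\int_M|\kappa|^4$. Hence
\begin{equation*}
\lambda^Q\int_M|\kappa|^2+\int_M|T_\kappa|^2=\int_M|\kappa|^4 .
\end{equation*}
This has the wrong sign to conclude directly, so a second identity is needed.

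For the second identity, use the Bochner-type formula from the proof of Lemma 2.7. Either apply the transverse Weitzenb\"ock formula to $\kappa$ directly, or integrate $\frac12\Delta_B|\kappa|^2$. With $d\kappa=0$ and $\text{div}_B\kappa=0$, $\kappa$ is basic-harmonic, and this gives
\begin{equation*}
\int_M|\nD_{tr}\kappa|^2+\lambda^Q\int_M|\kappa|^2=\int_M(\text{terms in }\kappa\text{ from }\text{div}_B\neq-\nD_{tr}^*) .
\end{equation*}
The intended closing step is to evaluate the right-hand side carefully and show that it is $\leq 0$, or that it combines with the previous identity to $\int_M|\kappa|^4\leq 0$. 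Alternatively, one can argue with the generic pointwise bound $|T_\kappa|^2\geq\frac1q(\text{tr}\,T_\kappa)^2$ together with the above. Either way, $\lambda^Q\geq 0$ would force $\kappa=0$, hence $T_\kappa=0$ and tautness by Lemma 2.6.

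The main obstacle is exactly this last sign bookkeeping: the deviation between $\text{div}_B$ and $\text{div}_Q$ produces $|\kappa|^4$-type terms, and they must be shown to enter with a favorable sign. If that fails, the fallback is a conformal-type argument. Write $\kappa=\kappa_h+df$ with $\kappa_h$ basic-harmonic; the tautness criterion then reduces to $\kappa_h=0$, and the transverse Bochner formula with $Ric^Q=\lambda^Q g_Q\geq 0$ should give this.
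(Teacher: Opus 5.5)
\paragraph{Where the proposal stops.} Your setup and your first integral identity are correct, and they follow the paper's route. You normalize to basic $\kappa$ with $\text{div}_B\kappa=0$, apply Lemma 2.7 to get $\text{div}_BT_\kappa=\lambda^Q\kappa$, pair with $\kappa$, and integrate. The gap is that you never finish. You reach
\begin{equation*}
\lambda^Q\int_M|\kappa|^2+\int_M|T_\kappa|^2=\int_M|\kappa|^4,
\end{equation*}
declare the sign unfavorable, and then only list possible continuations:
\begin{itemize}
\item a second Bochner identity whose right-hand side you leave unevaluated;
\item the bound $|T_\kappa|^2\geq\frac1q(\text{tr}\,T_\kappa)^2$, which is vacuous here because $\text{tr}_QT_\kappa=\text{div}_B\kappa=0$;
\item a Hodge-decomposition fallback, which adds nothing because after your normalization $\kappa$ is already basic-harmonic.
\end{itemize}
As written, the argument does not establish $\kappa=0$.

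\paragraph{The missing observation.} The identity you already have does not have the wrong sign. Expand $|\nD_{tr}\kappa|^2=|T_\kappa+\kappa\otimes\kappa|^2$ and use your own computation $\int_MT_\kappa(\kappa^\sharp,\kappa^\sharp)=-\int_M|\kappa|^4$. This gives $\int_M|\nD_{tr}\kappa|^2=\int_M|T_\kappa|^2-\int_M|\kappa|^4$, so your identity is exactly
\begin{equation*}
\lambda^Q\int_M|\kappa|^2+\int_M|\nD_{tr}\kappa|^2=0.
\end{equation*}
This is what the paper obtains directly. It substitutes $T_\kappa=\nD_{tr}\kappa-\kappa\otimes\kappa$ into the first slot of $-\int_Mg(T_\kappa,\nD_{tr}\kappa)$ rather than the second. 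The cross term is $\frac12\int_M\kappa^\sharp(|\kappa|^2)$, which vanishes since $\text{div}_B\kappa=0$.

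\paragraph{Closing the argument.} With $\lambda^Q\geq0$, the identity forces $\nD_{tr}\kappa=0$, hence $\text{div}_Q\kappa=0$. Then $|\kappa|^2=\text{div}_Q\kappa-\text{div}_B\kappa=0$, so $\kappa=0$ and $\F$ is taut.

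Your Weitzenb\"ock route would also have closed. The extra terms are $L_{\kappa^\sharp}\kappa-\nD_{\kappa^\sharp}\kappa=d|\kappa|^2-\frac12d|\kappa|^2=\frac12d|\kappa|^2$. Its integral against $\kappa$ is $-\frac12\int_M|\kappa|^2\,\text{div}_B\kappa=0$. No $|\kappa|^4$-type terms survive, and $-\text{div}_B$ is exactly the formal adjoint of $\nD_{tr}$ on basic forms, as you used yourself in the first step.
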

\begin{proof}
    As the tautness of a Riemannian foliation is a topological property, we do not need to consider the metric of ambient space $(M,g)$. Thus, we may assume that $\text{div}_B\kappa$ vanishes. Then by (2.24), we have the following.
    \begin{equation}
        \text{div}_BT_{\kappa}=i_{\kappa^{\sharp}}Ric^Q.
    \end{equation} That is, the following is straightforward.
    \begin{equation}
        \int_MRic^Q(\kappa^{\sharp},\kappa^{\sharp})=-\int_M g(T_{\kappa},\nD_{tr}\kappa)=-\int_M|\nD_{tr}\kappa|^2.
    \end{equation}
    As a consequence, $\nD_{tr}\kappa=0$ is forced. That is, $\kappa=0$ is induced, since we have $\text{div}_Q\kappa=0$.
\end{proof}

\begin{thm}
    A transverse Einstein foliation of $1$-dimensional leaves on a compact Einstein manifold is taut.
\end{thm}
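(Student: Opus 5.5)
The plan is a case split on the sign of the Einstein constant of $(M,g)$. Write $Ric=\lambda g$ with $\lambda$ constant, which is possible since $n\geq 3$. Let $\F$ be the given $1$-dimensional transverse Einstein foliation, so $Ric^Q=\lambda^Q g_Q$ with $\lambda^Q$ constant, and let $U$ be its unit tangent field. By Lemma 2.5 we may assume that $\kappa$ is basic. Since the Einstein condition fixes $g$, I will not change the ambient metric when $\lambda<0$; I will only use the freedom to adjust the leaf metric in the branch where the argument is purely transverse.

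\emph{Case $\lambda<0$.} Then $Ric$ is strictly negative. By Lemma 5.2 (Ranjan), a compact manifold with strictly negative Ricci curvature carries no Riemannian flow. So this case cannot occur, and we may assume $\lambda\geq 0$.

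\emph{Case $\lambda\geq 0$.} I want to show $S^Q=(n-1)\lambda^Q\geq 0$ and then apply Lemma 5.3. Two relations are available.
\begin{itemize}
\item Corollary 3.3, which holds for any $1$-dimensional Riemannian flow, gives $S^Q=S+|A|^2-2\,\text{div}_B\kappa=n\lambda+|A|^2-2\,\text{div}_B\kappa$.
\item Alternatively, take the transverse trace of (3.4): $S^Q=(n-1)\lambda+2|A|^2-\text{div}_B\kappa+|\kappa|^2$, where $\mathrm{tr}_Q T_\kappa=\text{div}_Q\kappa-|\kappa|^2$ and the trace conventions follow (3.8).
\end{itemize}
Since $S^Q$ is constant, I integrate either identity over $M$. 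By the divergence theorem, $\int_M \text{div}\,\kappa=0$, and $\text{div}_Q\kappa=\text{div}\,\kappa+|\kappa|^2$ as in the proof of Lemma 5.2. Hence $\int_M\text{div}_B\kappa=\int_M(\text{div}_Q\kappa-|\kappa|^2)=0$. Integrating the second identity then gives
\begin{equation*}
S^Q\,\mathrm{vol}(M)=(n-1)\lambda\,\mathrm{vol}(M)+\int_M\left(2|A|^2+|\kappa|^2\right)\geq 0 .
\end{equation*}
This ambient-metric computation needs no renormalization of $g$. So $S^Q\geq 0$, and Lemma 5.3 gives that $\F$ is taut. Lemma 5.3 is insensitive to the choice of bundle-like metric, because tautness is topological and $\lambda^Q$ depends only on $g_Q$.

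The main obstacle is the sign bookkeeping in the traced Riccati identity. The paper's conventions for $\text{div}_B$ versus $\text{div}_Q$ and for $\mathrm{tr}_QT_\kappa$ need to be checked, so that the integrated leftover terms really come out as $+|A|^2$ and $+|\kappa|^2$ rather than with the wrong sign. As a cross-check, I would take $\int_M Ric(U,U)=\int_M|A|^2$ from the proof of Lemma 5.2 and combine it with $S=\mathrm{tr}_Q Ric+Ric(U,U)$. From (3.4), this gives $\int_M(S^Q-(n-1)\lambda)=\int_M(2|A|^2+|\kappa|^2)$ up to divergence terms, which confirms nonnegativity whenever $\lambda\geq 0$.
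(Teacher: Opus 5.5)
Your proposal is correct and follows the paper's proof of Theorem 5.4 essentially step for step: you rule out $\lambda<0$ by Ranjan's Lemma 5.2, integrate the transverse trace of the Riccati identity to get $S^Q=(n-1)\lambda^Q\ge 0$ (the paper phrases this as $\lambda^Q\ge\lambda'\ge 0$ via (5.8)), and conclude with Lemma 5.3. There is one small slip, which does not affect the sign argument: the traced identity is $S^Q=(n-1)\lambda+2|A|^2-\text{div}_Q\kappa+|\kappa|^2=(n-1)\lambda+2|A|^2-\text{div}_B\kappa$, so integrating gives $S^Q\,\mathrm{vol}(M)=(n-1)\lambda\,\mathrm{vol}(M)+2\int_M|A|^2$, without the extra $\int_M|\kappa|^2$ term you wrote.
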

\begin{proof}
    In virtue of completeness of this paper, we sketch the proof.
    By Lemma 5.3, a taut Riemannian foliation must have a nonnegative $Ric^Q(\kappa^{\sharp},\kappa^{\sharp})$. Therefore, we may assume that the given transverse Einstein foliation have a strictly negative $Ric^Q=\lambda^Q \text{ } g_Q$. Also, assume $Ric=\lambda' \text{ }g$ for some constant $\lambda'$. 
    Since the transverse trace of (3.1) and (3.3) imply that 
    \begin{equation}
       (n-1)(\lambda'-\lambda^Q)-\text{div}\kappa+2|A|^2=0,
    \end{equation} we have
    \begin{equation}
        -2\int_M |A|^2=(n-1)(\lambda'-\lambda^Q)\text{vol}(M).
    \end{equation} Therefore, $\lambda^Q \geq \lambda'$ is obtained(cf. \cite{Pak} for taut foliation case only). Since the non-existence of a Riemannian flow on a compact manifold with strictly negative Ricci curvature is guaranteed by Lemma 5.2, we may assume that $\lambda'$ is nonnegative. That is, $\lambda^Q$ is automatically determined nonnegative. Hence, the assertion is proven.
\end{proof}

It is canonical to wonder that how to extend Theorem 5.4 to an arbitrary dimensional Riemannian foliation. As a consequence, we have a partial result of the above question as follows.
\begin{thm}
    A transverse Einstein foliation with scalar-flat(i.e. the scalar curvature of all leaves vanish) leaves on a compact Einstein manifold must be taut.
\end{thm}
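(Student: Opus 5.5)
We follow the proof of Theorem 5.4, now for a foliation $\F$ of arbitrary leaf dimension $p$ and codimension $q=n-p$. Write $Ric=\lambda' g$ and $Ric^Q=\lambda^Q g_Q$ with constants $\lambda',\lambda^Q$, and let $S^L$ denote the scalar curvature of the leaves, assumed to vanish.

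\textbf{Case $\lambda^Q\geq 0$.} Then $S^Q=q\lambda^Q\geq 0$, and Lemma 5.3 gives tautness directly. So we may assume $\lambda^Q<0$ and aim for a contradiction.

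\textbf{Step 1: $\lambda'\geq 0$.} In this case we want to bound the ambient Einstein constant from below by using the leaves. We take the tangential trace of the Gauss equation (2.26) over an orthonormal frame $\{\epsilon_\alpha\}$ of $T\F$:
\begin{equation*}
\sum_{\alpha,\beta}R(\epsilon_\alpha,\epsilon_\beta,\epsilon_\alpha,\epsilon_\beta)=S^L-|\kappa|^2+|T|^2 .
\end{equation*}
We then add the mixed curvatures $\sum_{\alpha,i}R(e_i,\epsilon_\alpha,e_i,\epsilon_\alpha)$. These are given by the O'Neill formula used in Lemma 3.2, now summed over all $\alpha$:
\begin{equation*}
\sum_{\alpha,i}R(e_i,\epsilon_\alpha,e_i,\epsilon_\alpha)=\text{div}_Q\kappa-|T|^2+|A|^2 ,
\end{equation*}
possibly after rewriting $\text{div}_Q\kappa$ as $\text{div}\,\kappa+|\kappa|^2$ as in the proof of Lemma 5.2. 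Summing the two gives
\begin{equation*}
p\lambda'=\sum_\alpha Ric(\epsilon_\alpha,\epsilon_\alpha)=S^L+\text{div}\,\kappa+|A|^2 .
\end{equation*}
With $S^L=0$, integrating over $M$ yields $p\lambda'\,\text{vol}(M)=\int_M|A|^2\geq 0$, so $\lambda'\geq 0$. This is the higher-dimensional analogue of Lemma 5.2: for flows $S^L=0$ holds automatically.

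\textbf{Step 2: $\lambda^Q\geq\lambda'$.} We take the transverse trace of the Riccati equation (3.1). For general $p$, the term $g(T_UY,T_UZ)$ becomes $\sum_\alpha|T_{\epsilon_\alpha}e_i|^2=|T|^2$, and $2g(A_YU,A_ZU)$ summed gives $2|A|^2$. This yields
\begin{equation*}
q(\lambda'-\lambda^Q)+|T|^2+2|A|^2-\text{div}_Q\kappa=0 .
\end{equation*}
The sign conventions here need checking against (3.1), and this is the main obstacle: the term $\text{div}_Q\kappa=\text{div}\,\kappa+|\kappa|^2$ is not a pure divergence. Integrating gives
\begin{equation*}
q(\lambda^Q-\lambda')\,\text{vol}(M)=\int_M\left(|T|^2-|\kappa|^2+2|A|^2\right).
\end{equation*}
The inequality $|T|^2\geq|\kappa|^2/p$ is too weak here, since the coefficient of $|\kappa|^2$ is $1$. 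So we would instead pass to the metric of Lemma 2.5 with $\text{div}_B\kappa=0$, keeping $g_Q$ fixed so that $Ric^Q$ is unchanged. This, however, changes the leafwise metric, which destroys $Ric=\lambda'g$ and $S^L=0$.

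Two ways around this:
\begin{itemize}
\item Use the identity of Step 1 instead. Combine it with the full trace $n\lambda'=S=S^Q+S^L+(\text{mixed terms})$, which follows from the Gauss and transverse Gauss equations and includes $-|A|^2-|T|^2-|\kappa|^2$ and $2\,\text{div}\,\kappa$ corrections. Eliminating $\text{div}\,\kappa$ and $|A|^2$ between the identities should isolate $\int_M(|T|^2+|\kappa|^2)$ with a sign forcing $\lambda^Q\geq\lambda'$.
\item Failing that, apply the Bochner argument of Lemma 5.3 to the original $\kappa$. Here Lemma 2.7 contributes extra terms $d(\text{div}_B\kappa)-(\text{div}_B\kappa)\kappa$, which must be controlled using $Ric^Q=\lambda^Q g_Q$.
\end{itemize}

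\textbf{Conclusion.} Once $\lambda^Q\geq\lambda'\geq 0$ is established, we have $S^Q\geq 0$, contradicting the assumption $\lambda^Q<0$. Hence Lemma 5.3 applies in all cases and $\F$ is taut. The delicate point is Step 2, namely keeping track of the $|\kappa|^2$ and $|T|^2$ terms in the higher-dimensional Riccati trace.
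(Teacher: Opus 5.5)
Your Step 1 is correct, and it is a cleaner route to the paper's Lemma 5.6. Summing the Gauss equation and the O'Neill mixed-curvature formula over the leaf directions gives, pointwise, $p\lambda'=S^L+\mathrm{div}\,\kappa+|A|^2$, hence $p\lambda'\,\mathrm{vol}(M)=\int_M|A|^2\ge 0$. The paper gets the same identity by a long contraction of the Weyl decomposition against $g_Q$- and $g_{\F}$-Kulkarni--Nomizu squares. Your integrated transverse trace in Step 2, $q(\lambda^Q-\lambda')\,\mathrm{vol}(M)=\int_M(|T|^2-|\kappa|^2+2|A|^2)$, is also correct. The trouble is that it does not decide the sign, and neither of your workarounds closes the gap.

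The first workaround is circular. The full scalar curvature identity $S=S^Q+S^L-|A|^2-|T|^2+|\kappa|^2+2\,\mathrm{div}\,\kappa$ is exactly the sum of the transverse trace of (5.30) and your Step 1 identity. Eliminating between them can only return $q\lambda^Q\,\mathrm{vol}(M)=(n+p)\lambda'\,\mathrm{vol}(M)+\int_M(|T|^2-|\kappa|^2)$, and $|T|^2-|\kappa|^2$ has no sign once $p\ge 2$. The second workaround cannot succeed either: the Bochner argument of Lemma 5.3 yields a contradiction only when $\lambda^Q\ge 0$, which is precisely what is unknown.

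The missing idea is the pointwise criterion of Theorem 2.10, a Riccati comparison along complete horizontal geodesics, which no integral identity replaces. The paper never proves $\lambda^Q\ge\lambda'$ outright; it splits into cases. If $\lambda'\ge\lambda^Q$, then for every $Y\perp\F$,
\[
Ric(Y,Y)-Ric^Q(Y,Y)+2\sum_{\alpha}|A_Y\epsilon_\alpha|^2=(\lambda'-\lambda^Q)|Y|^2+2\sum_{\alpha}|A_Y\epsilon_\alpha|^2\ge 0 ,
\]
which is condition (2.29). So $T=0$, the foliation is totally geodesic and hence taut, and (5.30) then forces $A=0$ and $\lambda'=\lambda^Q$. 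Otherwise $\lambda^Q>\lambda'\ge 0$ by Step 1, and Lemma 5.3 applies.

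In your own setup ($\lambda^Q<0\le\lambda'$) this finishes the argument immediately. Theorem 2.10 gives $T=0$, so $\kappa=0$, and the transverse trace of (5.30) becomes $q(\lambda'-\lambda^Q)+2|A|^2=0$. This is impossible since $\lambda'-\lambda^Q>0$.
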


Before we prove, we need to check the following lemma.
\begin{lem}
    Let $(M,\F,g)$ be a compact Einstein manifold with a transverse Einstein foliation. If we assume that all leaves of $\F$ are scalar-flat, then the following equation holds.
    \begin{equation}
    \int_M|A|^2=p\int \lambda',
    \end{equation} where $\lambda'$ is the constant satisfying $Ric=\lambda'\text{ }g$.
\end{lem}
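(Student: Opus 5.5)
The plan is to compute the integral of the vertical trace of the Ricci tensor, $\sum_{\alpha=1}^p Ric(\epsilon_\alpha,\epsilon_\alpha)=p\lambda'$, in two ways. One way uses the O'Neill decomposition of the full scalar curvature. The other uses the horizontal Riccati equation (3.1). The point is that every term involving $T$ and $\kappa$ should cancel after integration, leaving only $|A|^2$. This generalizes the computation already done for $p=1$ in Lemma 3.2 and in the proof of Theorem 5.4.

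\textbf{Step 1: horizontal trace of the Riccati equation.} Take the transverse trace of (3.1) over a local orthonormal frame $\{e_i\}$ of $Q$, using $Ric=\lambda' g$ and $Ric^Q=\lambda^Q g_Q$. Writing $\frac12 L_{\kappa^\sharp}g$ traced over $Q$ as $\text{div}_Q\kappa$, I expect
\begin{equation*}
(n-p)\lambda'-S^Q+|T|^2+2|A|^2-\text{div}_Q\kappa=0 .
\end{equation*}
This is the $p$-dimensional analogue of (5.9), with $g(T_UY,T_UZ)$ now summed over all $\epsilon_\alpha$, so it contributes $|T|^2$. I then substitute $\text{div}_Q\kappa=\text{div}\,\kappa+|\kappa|^2$, which is the computation (5.4) done with a leafwise frame in place of $U$, and integrate over $M$. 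The term $\int_M\text{div}\,\kappa$ vanishes by the divergence theorem.

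\textbf{Step 2: O'Neill decomposition of the scalar curvature.} I will use the Gray--O'Neill formula for a Riemannian foliation with bundle-like metric (\cite{Bes}, 9.37):
\begin{equation*}
S=S^Q+\hat S-|A|^2-|T|^2-|\kappa|^2-2\,\delta\kappa .
\end{equation*}
Here $\hat S$ is the intrinsic scalar curvature of the leaves, $\delta=-\text{div}$, and the sign conventions must be matched to (2.26)--(2.27). The hypothesis gives $\hat S=0$, and we have $S=n\lambda'$ and $S^Q=(n-p)\lambda^Q$. Integrating kills the $\delta\kappa$ term, so
\begin{equation*}
n\lambda'\,\text{vol}(M)=\int_M S^Q-\int_M\big(|A|^2+|T|^2+|\kappa|^2\big).
\end{equation*}

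\textbf{Step 3: combine.} Subtracting the integrated identity of Step 1 from that of Step 2 eliminates $\int_M S^Q$, hence $\lambda^Q$. The $|T|^2$ terms should cancel, and the $|A|^2$ coefficients should combine to $2-1=1$. What remains should be $p\lambda'\,\text{vol}(M)=\int_M|A|^2$, which is the claimed identity (5.11).

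\textbf{Main obstacle.} The hard part is the sign and normalization bookkeeping for the terms in $|\kappa|^2$ and $|T|^2$. A naive combination can leave a residual multiple of $\int_M|\kappa|^2$. The decisive check is whether the formula in Step 2 uses the intrinsic leaf scalar curvature $\hat S$ (the one assumed to vanish) or the ambient leafwise sum $\sum_{\alpha,\beta}R(\epsilon_\alpha,\epsilon_\beta,\epsilon_\alpha,\epsilon_\beta)$. These two differ, via the Gauss equation (2.26), by exactly $|T|^2-|\kappa|^2$.

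I would first redo the $p=1$ case as a consistency check. There the leaf is one-dimensional, so $\hat S=0$ automatically. In that case $|T|^2=|\kappa|^2$, and the result must reproduce Lemma 3.2, $Ric(U,U)=|A|^2+\text{div}\,\kappa$. Fixing the signs against this case, I would then redo Step 2 directly. To do so I would sum the vertical Gray--O'Neill formula $Ric(V,V)=\hat{Ric}(V,V)-g(\kappa^\sharp,T_VV)+g((\check\delta T)V,V)+g(A V,A V)$ over the $\epsilon_\alpha$. This gives $p\lambda'=\hat S-|\kappa|^2+\text{tr}(\check\delta T)+|A|^2$. The remaining task is to verify that $\int_M\text{tr}(\check\delta T)=\int_M|\kappa|^2$ by writing the trace as a divergence plus $|\kappa|^2$. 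If this holds, it yields (5.11) directly, without needing Step 1.
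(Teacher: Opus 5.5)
Your Step 2 formula is misquoted, and with it Steps 1--3 do not close. In O'Neill's scalar-curvature formula (Besse 9.37) the last term is $-2\check\delta N$. Here $\check\delta$ is the \emph{horizontal} codifferential, $\check\delta\kappa=-\sum_i g(D_{e_i}\kappa^\sharp,e_i)=-(\text{div}\,\kappa+|\kappa|^2)$, and this term does not integrate to zero. The correct identity is $S=S^Q+\hat S-|A|^2-|T|^2+|\kappa|^2+2\,\text{div}\,\kappa$. With your version, Step 3 leaves $p\lambda'\,\text{vol}(M)=\int_M|A|^2-2\int_M|\kappa|^2$. With the corrected one, the $|T|^2$ and $|\kappa|^2$ terms cancel exactly against your Step 1 (which is correct), and the claimed identity follows. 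The formula does use the intrinsic $\hat S$, so the $\hat S$-versus-ambient-sum issue you single out is not the source of the residual. Your $p=1$ test would catch the real error: there your formula reads $S=S^Q-|A|^2-2|\kappa|^2+2\,\text{div}\,\kappa$, which contradicts Corollary 3.3. Concentric circles in $\mathbb{R}^2\setminus\{0\}$ already violate it, since there $S=S^Q=|A|=\text{div}\,\kappa=0$ and $|T|^2=|\kappa|^2=r^{-2}$.

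Your fallback, by contrast, is a complete proof, and it takes a different route from the paper. Expand $(D_{e_i}T)_{\epsilon_\alpha}\epsilon_\alpha$. The connection terms either cancel (symmetry of $T$ on $T\F$ against antisymmetry of the connection coefficients) or are vertical. Hence $\text{tr}(\check\delta T)=\sum_i g(D_{e_i}\kappa^\sharp,e_i)=\text{div}\,\kappa+|\kappa|^2$ pointwise, and $\sum_\alpha Ric(\epsilon_\alpha,\epsilon_\alpha)=\hat S+|A|^2+\text{div}\,\kappa$. This is exactly the $p$-dimensional version of Lemma 3.2.

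The paper's route is as follows. It writes $R=\frac{\lambda'}{2(n-1)}g\KN g+W$ and pairs it with $g_Q\KN g_Q$ and $g_\F\KN g_\F$. It uses trace-freeness of $W$ to identify the two Weyl pairings. It evaluates the curvature pairings by the transverse Gauss equation (2.27), giving $4S^Q-12|A|^2$, and by the leafwise Gauss equation (2.26) with $\hat S=0$, giving $4|T|^2-4|\kappa|^2$. It then eliminates $S^Q$ with the traced $p$-dimensional Riccati equation, reaching the same pointwise identity $\text{div}\,\kappa+|A|^2=p\lambda'$.

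The Weyl step is only a disguised form of the identity $\sum_\alpha Ric(\epsilon_\alpha,\epsilon_\alpha)-\sum_i Ric(e_i,e_i)=\sum_{\alpha,\beta}R(\epsilon_\alpha,\epsilon_\beta,\epsilon_\alpha,\epsilon_\beta)-\sum_{i,j}R(e_i,e_j,e_i,e_j)$, which holds for any metric. What it buys the paper is that mixed curvatures never appear, so only the two Gauss equations and the Riccati equation are needed. Your route requires the mixed O'Neill formula and the $\check\delta T$ trace, but it is shorter. It also shows plainly that only $\hat S=0$ and $\sum_\alpha Ric(\epsilon_\alpha,\epsilon_\alpha)=p\lambda'$ matter. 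The transverse Einstein hypothesis is superfluous; in the paper it enters only through $S^Q$, which is then eliminated.
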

\begin{proof}
    Since the given Riemannian manifold is Einstein, the Ricci decomposition (2.3) of $M$ is derived as follows.
    \begin{equation}
        R=\dfrac{\lambda'}{2(n-1)}g\KN g+W.
    \end{equation}
    Also, we may consider the following tensor inner product from (2.27).
    \begin{equation}
        g(\dfrac{\lambda'}{2(n-1)}g\KN g+W,g_Q\KN g_Q)=4S^Q-12|A|^2,
    \end{equation} since the following calculations are computed.
    \begin{equation}
        -4g(A_{e_i}e_j,A_{e_k}e_l)(g(e_i,e_k)g(e_j,e_l)-g(e_i,e_l)g(e_j,e_k))=-8|A|^2,
    \end{equation}
    \begin{equation}
        2g(A_{e_i}e_l,A_{e_j}e_k)(g(e_i,e_k)g(e_j,e_l)-g(e_i,e_l)g(e_j,e_k))=-2|A|^2,
    \end{equation}
    \begin{equation}
        -2g(A_{e_i}e_k,A_{e_j}e_l)(g(e_i,e_k)g(e_j,e_l)-g(e_i,e_l)g(e_j,e_k))=-2|A|^2,
    \end{equation} and by definition of Kulkarni-Nomizu product and the transverse scalar curvature, 
    \begin{equation}
   g(R^Q,g_Q\KN g_Q )=4S^Q 
    \end{equation} holds. Also, 
    \begin{equation}
        g(g\KN g,g_Q\KN g_Q)=8q(q-1)
    \end{equation} is derived by direct calculation of metric contraction and Kulkarni-Nomizu product.
    Thus, we have \begin{equation}
        \dfrac{4q(q-1)}{(n-1)}\lambda'+g(W,g_Q\KN g_Q)=4S^Q-12|A|^2.
    \end{equation}
    In other words, 
     \begin{equation}
        \dfrac{1}{4}g(W,g_{\F}\KN g_{\F})=S^Q-3|A|^2-\dfrac{q(q-1)}{(n-1)}\lambda'
    \end{equation} as $g$ is bundle-like and $W$ is trace-free.
Hence, \begin{equation}
\begin{split}
     \dfrac{1}{4}g(W,g_{\F}\KN g_{\F})&=q(\lambda^Q-\lambda')-3|A|^2+\dfrac{pq}{(n-1)}\lambda'\\&
     =|T|^2+2|A|^2-\text{div}\kappa-|\kappa|^2-3|A|^2+\dfrac{pq}{(n-1)}\lambda',
\end{split} 
\end{equation} by the assumption and (3.1).
    
    On the other hand, the following is deduced from (2.26) and the assumption.
    \begin{equation}
        g(R,g_{\F}\KN g_{\F})=g(R^L,g_{\F}\KN g_{\F})-4|\kappa|^2+4|T|^2=-4|\kappa|^2+4|T|^2,
    \end{equation}
    since we also have the following.
    \begin{equation}
-g(T_{\epsilon_{\alpha}}\epsilon_{\gamma},T_{\epsilon_{\beta}}\epsilon_{\delta})(g(\epsilon_{\alpha},\epsilon_{\gamma})g(\epsilon_{\beta},\epsilon_{\delta})-g(\epsilon_{\alpha},\epsilon_{\delta})g(\epsilon_{\beta},\epsilon_{\gamma}))=-|\kappa|^2+|T|^2,
\end{equation} 
\begin{equation}
g(T_{\epsilon_{\alpha}}\epsilon_{\delta},T_{\epsilon_{\beta}}\epsilon_{\gamma})(g(\epsilon_{\alpha},\epsilon_{\gamma})g(\epsilon_{\beta},\epsilon_{\delta})-g(\epsilon_{\alpha},\epsilon_{\delta})g(\epsilon_{\beta},\epsilon_{\gamma}))=|T|^2-|\kappa|^2.
    \end{equation}

That is,
\begin{equation}
     \dfrac{1}{4}g(W,g_{\F}\KN g_{\F})=-\text{div}\kappa-|A|^2+\dfrac{pq}{(n-1)}\lambda'+\dfrac{1}{4}g(R,g_{\F}\KN g_{\F}) 
\end{equation} is obtained.
From the above equation, we derive
\begin{equation}
    \text{div}\kappa+|A|^2-\dfrac{pq}{(n-1)}\lambda'=\dfrac{1}{4}g(\dfrac{\lambda'}{2(n-1)}g\KN g,g_{\F}\KN g_{\F}).
\end{equation}
i.e. \begin{equation}
    \text{div}\kappa+|A|^2-\dfrac{pq}{(n-1)}\lambda'=\dfrac{8p(p-1)}{8(n-1)}\lambda'
\end{equation} is calculated, by definition of $g_{\F}$.
Therefore, the integrand of the above formula is given as follows.
\begin{equation}
    \int_M|A|^2=\dfrac{p(n-1)}{(n-1)}\int_M\lambda'=p\int_M\lambda'=p\lambda'\text{vol}(M),
\end{equation} as desired.
\end{proof}

Now we are ready to prove Theorem 5.5.
\begin{proof} Let $\lambda'$ be a constant satisfying $Ric=\lambda\text{ }g$ on a compact Einstein manifold $(M,\F,g)$ with a transverse Einstein foliation $\F$ satisfying $Ric^Q=\lambda^Q\text{ }g_Q$. By (5.11), it is clear that the constant $\lambda'$ is nonnegative. On the other hand, if $\lambda' \geq \lambda^Q$, then we have
\begin{equation}
    (\lambda'-\lambda^Q)g(Y,Y)+2\sum_{\alpha=1}^pg(A_Y\epsilon_{\alpha},A_Y\epsilon_{\alpha}) \geq 0.
\end{equation} By Theorem 2.10, $\F$ should be totally geodesic and the equality holds in (5.29), since we generally have
\begin{equation}
  Ric(Y,Z)-Ric^Q(Y,Z)+\sum_{\alpha=1}^p(g(T_{\epsilon_{\alpha}}Y,T_{\epsilon_{\alpha}}Z)+2g(A_Y{\epsilon_{\alpha}},A_{Z}{\epsilon_{\alpha}}))-\dfrac{1}{2}L_{\kappa^{\sharp}}g(Y,Z)=0,   
\end{equation} for an arbitrary Riemannian foliation on complete manifolds \cite{Bes}.

Hence, $\lambda' \leq \lambda^Q$ is forced since $\lambda'\geq \lambda^Q$ implies $A=0$ and $\lambda'=\lambda^Q$. That is, $\lambda^Q$ is a nonnegative constant. Therefore, $\F$ must be taut by Lemma 5.3.
\end{proof}

\addresses


\begin{thebibliography}{00}
\bibitem{AL} J. A. Alvarez L\'opez, \textit{The basic component of the mean curvature of Riemannian foliations,} Ann. Glob. Anal. Geom. 10, 179–194 (1992).
\bibitem{Bes} A. Besse, \textit{Einstein manifolds,} Springer-Verlag, New York, (1987).
\bibitem{BD} P. Baird and L. Danielo \textit{Three-dimensional Ricci solitons which project to surfaces,} J. Reine Angew. Math. 608, 65–91 (2007). 
\bibitem{BH} R. A. Blumenthal and J. J. Hebda, \textit{De Rham decomposition theorems for
foliated manifolds,} Ann. Inst. Fourier, Grenoble, 33, 2, 183-198 (1983).
\bibitem{Car} Y. Carri\`ere, \textit{Flots riemanniens,} Transversal structure of foliations(Toulouse, 1982), 31–52, (1984).
\bibitem{CM} G. Catino and P. Mastrolia, \textit{Weyl scalars on compact Ricci solitons, }J. Geom. Anal. 29, no. 4, 3328–3344 (2019).
\bibitem{CMM} G. Catino, P. Mastrolia and D. D. Monticelli, \textit{Gradient Ricci solitons with vanishing conditions on Weyl,} J. Math. Pures Appl. (9) 108, no. 1, 1–13 (2017).
\bibitem{CMMR} G. Catino, P. Mastrolia, D. D. Monticelli, and M. Rigoli, \textit{On the geometry of gradient Einstein-type manifolds,} Pacific J. Math. 286, no. 1, 39–67 (2017).
\bibitem{CH} J. Co and S. Hwang, \textit{Gradient almost Ricci solitons with vanishing conditions on Weyl curvature and Bach tensor,} J. Korean Math. Soc. 57, no. 2, 539–552 (2020).
\bibitem{FG} M. Fern\'andez-L\'opez and E. Garc\'ia-R\'io, \textit{Rigidity of shrinking Ricci solitons,} Math. Z. 269, no. 1-2, 461–466 (2011).
\bibitem{Jun} S.D. Jung, \textit{Eigenvalue estimates for the basic Dirac operator on a Riemannian foliation admitting a basic harmonic 1-form,} J. Geom. Phys. 57(4), 1239–1246,
(2007). 
\bibitem{Lau} J. Lauret, \textit{Ricci soliton solvmanifolds,} J. Reine Angew. Math. 650, 1–21 (2011).
\bibitem{Moo} J. Moon, \textit{Symmetric tautness tensor of Riemannian foliations,} arXiv: 2605.25408 (2026).
\bibitem{Ton} Ph. Tondeur, \textit{Geometry of foliations,} Monographs in Mathematics, vol. 90, p. 305. Birkhäuser Verlag, Basel (1997).
\bibitem{Pak} H. K. Pak, \textit{On one-dimensional metric foliations in Einstein spaces,} Illinois J. Math. 36, no. 4, 594–599 (1992).
\bibitem{Ran} A. Ranjan, \textit{Structural equations and an integral formula for foliated manifolds,} Geom. Dedicata 20, no. 1, 85–91 (1986).
\bibitem{Rei} B. L. Reinhart, \textit{Foliated manifolds with bundle-like metrics,} Ann. Math. 2(69), 119–132 (1959). 





\end{thebibliography}
\end{document}